\let\wfs@comment@comment\comment
\let\comment\@undefined
\let\wfs@changes@comment\comment
\let\comment\@undefined
\newcommand\comment{%
    \ifthenelse{\equal{\@currenvir}{comment}}
    {\wfs@comment@comment}
    {\wfs@changes@comment}%
}
\theoremstyle{definition}
\newtheorem{theorem}{Theorem}[section]
\newtheorem{definition}[theorem]{{{Definition}}}
\newtheorem{example}[theorem]{{{Example}}}
\newtheorem{remark}[theorem]{{{Remark}}}
\newtheorem{corollary}[theorem]{{{Corollary}}}
\newtheorem{lemma}[theorem]{{{Lemma}}}
\newcommand{\numberset}{\mathbb}
\newcommand{\F}{\numberset{F}}
\newcommand{\HHH}{\mathcal{H}}
\newcommand{\C}{\mathcal{C}}
\newcommand{\mC}{\mathcal{C}}
\newcommand{\sH}{\sigma}
\newcommand{\wt}{\textnormal{wt}}
\DeclareMathOperator{\PG}{PG}
\newcommand{\wH}{\textnormal{wt}}
\title{\textbf{Small Strong Blocking Sets by Concatenation}}
\author[1]{Daniele Bartoli}
\affil[1]{Universit\`a degli Studi di Perugia, Italy}
\author[2]{Martino Borello}
\affil[2]{Universit\'e Paris 8, Laboratoire de G\'eom\'etrie, Analyse et Applications, LAGA,
Universit\'e Sorbonne Paris Nord, CNRS, UMR 7539, France}
\date{}
\begin{document}

\maketitle


\begin{abstract}
Strong blocking sets and their counterparts, minimal codes,  attracted lots of attention in the last years. Combining  the  concatenating construction of codes with a geometric insight into the minimality condition, we explicitly provide infinite families of small strong blocking sets, whose size is linear in the dimension of the ambient projective spaces. As a byproduct, small saturating sets are obtained.
\end{abstract}

\section*{Introduction}

A set of points $\mathcal{B}$ in a projective space is called a \emph{blocking set} if any hyperplane  contains at least one point of $\mathcal{B}$. A blocking set $\mathcal{B}$ is \emph{strong} if any hyperplane section of $\mathcal{B}$ generates the hyperplane itself; see \cite{1930-5346_2011_1_119,zbMATH06340223,bonini2020minimal}. Although blocking sets were deeply investigated in the last decades (see for example \cite{bs} and references therein), much less is known about strong blocking sets. Strong blocking sets are the geometrical counterpart of an important class of error correcting codes, the so-called \emph{minimal codes}; see \cite{alfarano2019geometric,Full_Characterization}.  One of the main open questions concerning strong blocking sets (in relation to the parameters of the associated minimal code) concerns the \emph{explicit} constructions of \emph{infinite} families of strong blocking sets whose size grows \emph{linearly} with the dimension of the ambient projective space. 

In this paper we exploit  a machinery introduced in  \cite{Cohen_2013} to provide  families of asymptotically good minimal codes  via codes concatenation. The main result of our paper is what we call \emph{outer AB condition} (in analogy to the Ashikhmin-Barg condition proved in \cite{ashikhmin1998minimal}): if $\C$ is a linear code over $\F_{q^k}$ such that
\[\max_{c\in \C} \wt(c)<\frac{q}{q-1} \min_{c\in \C-\{0\}} \wt(c)\]
then the concatenation of $\C$ with a minimal code is minimal. So the corresponding set of points is a strong blocking set. We show that some (existing) MDS codes have such property, making effective the construction of minimal codes that satisfy the outer AB condition. 
We obtain explicit constructions of small strong blocking sets in projective spaces, whose size linearly depends on the dimension of the ambient space. In the last part, we produce some examples showing that in some cases concatenation provides the smallest known strong blocking sets. Finally, as a byproduct, we obtain explicit constructions of saturating sets for specific dimensions whose size is the smallest one can find in the literature. 


\paragraph{Outline.} \  The paper is organized into four sections. Section~\ref{sec:background} contains the preliminaries on minimal codes, strong blocking sets and the known bounds on their size. In Section~\ref{sec:strong} we present a geometric interpretation of the well-known Ashikhmin-Barg condition, we introduce the outer AB condition and we present some explicit construction of small strong blocking sets. Section~\ref{sec:asymptotic} is devoted to some asymptotic results, both theoretical and constructive. Finally, we give some numerical results and we apply some of the previous results to the study of $\rho$-saturating sets in Section~\ref{sec:application}.

\section{Background}\label{sec:background}

\subsection{Minimal codes}

For a vector $v \in \F_q^n$, the \emph{Hamming weight} of $v$ is $\wH(v):=|\sH(v)|$, where $\sH(v):=\{i\in \{1,\ldots,n\} \mid v_i \neq 0\}$ is the Hamming support of $v$. An $[n,k,d]_q$ \emph{linear code} $\C$ is a subspace of $\F_q^n$ of \emph{dimension} $k$ with  \emph{minimum weight}
\[d=d(\C):=\min\{\wH(c) \mid c \in \mC, \, c \neq 0\}\]
and its elements are called \emph{codewords}. Normally, these are the fundamental parameters for error correcting purposes, but in our context the \emph{maximum weight} $w(\C):=\max\{\wH(c)\mid c\in \mC\}$ will play a crucial role. A \emph{generator matrix} $G$ of $\mC$ is a matrix whose rows form a basis for $\mC$.
A linear code $\mC$ is called \emph{projective} (resp. \emph{non-degenerate}) if in one (and thus in all) generator matrix $G$ of $\mC$ no two columns are proportional (resp. no column is the zero vector). 
Two linear codes are said to be (monomially) \emph{equivalent} if there is a monomial transformation sending one into the other. A family of codes is said to be \emph{asymptotically good} if it admits an infinite subfamily of increasing length with both dimension and minimum weight growing linearly in the length. 

\begin{definition} \label{def:mc}
A nonzero codeword $c$ of a code $\mC$ over $\F_q$ is called \emph{minimal} 
if every nonzero codeword $c^{\prime}$ in $\mC$ with $\sigma(c^{\prime}) \subseteq \sigma(c)$ is $\lambda c$ for some nonzero $\lambda\in \F_q$. A linear code $\mC$ is a \emph{minimal code} if all its codewords are minimal.
\end{definition}

\begin{example}\label{example:simplexcode}
A \emph{simplex code} $\mathcal{S}_q(k)$ of dimension $k$ over $\F_q$ is a code defined up to equivalence as follows: its generator matrix is obtained by choosing as columns a nonzero vector from each 1-dimensional subspace of $\F_q^k$. It is easy to prove that its parameters are $[(q^k-1)/(q-1),k,q^{k-1}]_q$ and that all nonzero codewords have the same weight. This last property implies that $\mathcal{S}_q(k)$ is minimal.
\end{example}

The importance of minimal codes relies on their connection with cryptography and coding theory. In fact, minimal codewords in linear codes  were used by Massey \cite{Massey1993,Massey1995} to determine the access structure in a code-based secret sharing scheme.  Previously,  minimal codewords were  studied in connection with decoding algorithms \cite{MR551274}. In the last years, minimal codes attracted  attention and many of the constructions are connected with  few-weight codes (see e.g. \cite{SHI202160,8915756,SHI2020111840}) or exploit the so-called Ashikhmin-Barg condition  \cite{ashikhmin1998minimal}. In the last few years, geometric methods have played a crucial role, as we will see in the following sections. Finally, in a very recent work \cite{alfaranolinear}, minimal rank-metric codes are investigated, in connection with linear sets. As a byproduct, new constructions and perspectives on minimal codes are obtained. 

In this paper, the \emph{concatenation of codes}, a standard way of combining codes to obtain a code of
larger length, plays a crucial role. To describe this process, let $n,k$ be two integers such that $n\geq k$ and let $\pi:\F_{q^k}\to \F_q^n$ be an $\F_q$-linear injection. In the concatenation process, the $\F_{q^k}$-linear code $\mathcal{C}$  is called
\emph{outer code} and the $\F_q$-linear code $\mathcal{I}:=\pi(\F_{q^k})$ is
called \emph{inner code}. If $\C$ is an $\F_{q^k}$-linear code of length $N$ and $\F_{q^k}$-dimension $K$, then the $\F_q$-linear code
$$\mathcal{I} \ \Box_\pi \ \mathcal{C}:=\{(\pi(c_1),\ldots,\pi(c_N))\ | \ (c_1,\ldots,c_N)\in \C\}\subseteq \F_q^{N\cdot n}$$
is called the concatenation of $\mathcal{C}$ with
$\mathcal{I}$ by $\pi$, or simply the \emph{concatenated
code}. This last depends on the choice of $\pi$, but
 some of its properties are independent of  $\pi$. For example,
 $\mathcal{I} \ \Box_\pi \ \mathcal{C}$ is an $[N\cdot n,K\cdot k,\geq d(\mathcal{I})\cdot
d(\mathcal{C})]_q$ code for every choice of $\pi$ (see for example \cite[\S 1.2.3.]{tsfasman1991algebraic}). If the properties in which we are interested do not depend on $\pi$, we will simply denote by $\mathcal{I} \ \Box \ \mathcal{C}$ the concatenated code. 

\subsection{Strong blocking sets}\label{sec:strongblockingsets}


For $k>1$, the \emph{finite projective geometry} of dimension $k-1$ and order $q$, denoted by $\PG(k-1,q)$ is
$$ \PG(k-1,q):= \left(\F_q^{k}\setminus \{0\}\right)/_\sim, $$
where $u\sim v$ if and only if $u=\lambda v$
for some nonzero $\lambda \in \F_q$.
A \emph{hyperplane} is a subspace $\HHH\subseteq \PG(k-1,q)$ of codimension $1$.  A \emph{projective $[n,k,d]_q$ system} $\mathcal{P}$ is a finite  set of $n$ points (counted with multiplicity) of $\PG(k-1,q)$ not all lying on a hyperplane and such that \[d = n- \max_{\HHH \text{ hyperplane }}\{|\HHH\cap \mathcal{P}|\}.\]

For an $[n,k,d]_q$  non-degenerate code $\mathcal{C}$ with generator matrix  $G$, let $\mathcal{P}$ be set of columns of $G$, considered as points in $\PG(k-1,q)$. For $u=(u_1,\ldots,u_k)\in \F_q^k$, $\wH(uG)=n-|\HHH\cap \mathcal{P}|$, where $\HHH$ is the hyperplane defined by the equation $u_1x_1+\ldots+u_kx_k=0$. So $\mathcal{P}$ is a projective $[n,k,d]_q$ system. For the same reason, from a projective $[n,k,d]_q$ system $\mathcal{P}$ one can obtain an $[n,k,d]_q$ code by choosing a representative for any point of $\mathcal{P}$ and considering the code generated by the matrix having these representatives as columns. This gives the well-known correspondence (see for example \cite[Theorem~1.1.6]{tsfasman1991algebraic}) between equivalence classes of non-degenerate $[n,k,d]_q$ linear codes and equivalence classes of projective $[n,k,d]_q$ systems.

Minimal codes correspond to  $[n,k,d]_q$ systems with additional properties.

\begin{definition}[\!\!\cite{1930-5346_2011_1_119}]
A subset $\mathcal{M} \subseteq \PG(k-1,q)$ is called a \emph{strong blocking set} if for every hyperplane $\HHH$ of $\PG(k-1,q)$ we have $\langle \mathcal{M} \cap \HHH \rangle =\HHH$.
 \end{definition} 

\begin{example}\label{example:wholespace}
The whole $\PG(k-1,q)$ is, clearly, a strong blocking set.
\end{example}

Strong blocking sets were introduced in \cite{1930-5346_2011_1_119}. In \cite{zbMATH06340223}, strong blocking sets are referred to as generator sets and they are constructed as union of disjoint lines. In \cite{bonini2020minimal} they were reintroduced, with the name of \emph{cutting blocking sets}, in order to construct a particular family of minimal codes. 
In \cite{alfarano2019geometric} and \cite{Full_Characterization} it was independently shown that strong blocking sets are the geometrical counterparts of minimal codes: the above correspondence between non-degenerate codes and projective systems restricts to a correspondence between equivalence classes of projective $[n,k,d]_q$ minimal codes and equivalence classes of subsets of $\PG(k-1,q)$ that are strong blocking sets. Since in this paper we are interested in short minimal codes or, equivalently, in  small strong blocking sets in projective spaces, it is not restrictive to narrow down to projective codes, which correspond to projective systems in which all the points have multiplicity one. Clearly, the simplex codes defined in Example \ref{example:simplexcode} correspond to the strong blocking sets defined in Example \ref{example:wholespace}. 

\subsection{Bounds on the size of strong blocking sets}

Given the above equivalence between minimal codes and strong blocking sets, for the sake of brevity, in this section we will only state the bounds for the size of the latter, but clearly these bounds also apply to the length of minimal codes. 

For a given dimension and a base field, simplex codes are the longest projective minimal codes, as the whole projective space is the largest strong blocking set. Since from a given strong blocking set we can always obtain a larger one by adding any set of points, it is interesting to know how small  a strong blocking set can be. 

A $(k-1)$-\emph{fold blocking set} in $\PG(k-1,q)$ is a subset $\mathcal{B}$ of points such that every hyperplane meets $\mathcal{B}$ in at least $k-1$ points. Clearly, a strong blocking set in  $\PG(k-1,q)$ is a $(k-1)$-fold blocking set. By a well-known result of Beutelspacher on $(k-1)$-fold blocking sets (see \cite[Theorem~2]{beutelspacher1983}), the size of a strong blocking set in $\PG(k-1,q)$ is at least $(q+1)(k-1)$, if $k\leq q+1$. The same result holds without restrictions on $k$:

\begin{theorem}[\!\!\cite{alfarano2020three}]\label{thm:lowerbound} The size of a strong blocking set in $\mathrm{PG}(k-1,q)$ is at least $(q+1)(k-1)$.
\end{theorem}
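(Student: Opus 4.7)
My plan is to argue by induction on $k$. The base case $k\le q+1$ is exactly the content of Beutelspacher's theorem, applied to the observation (stated just before the theorem) that every strong blocking set is in particular a $(k-1)$-fold blocking set. The new content thus lies in the range $k>q+1$.

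The pivotal tool for the inductive step would be a \emph{projection lemma}: for any point $R\in\mathrm{PG}(k-1,q)$, the image $\pi_R(\mathcal{M}\setminus\{R\})$ under the projection from $R$ is again a strong blocking set, this time in $\mathrm{PG}(k-2,q)$. The verification is immediate: any hyperplane $H'\subset\mathrm{PG}(k-2,q)$ lifts under $\pi_R$ to a hyperplane $H\ni R$ of $\mathrm{PG}(k-1,q)$, and the condition $\langle\mathcal{M}\cap H\rangle=H$ descends to $\langle\pi_R(\mathcal{M}\cap H\setminus\{R\})\rangle=H'$ after quotienting by $\langle R\rangle$. The inductive hypothesis then gives $|\pi_R(\mathcal{M}\setminus\{R\})|\ge(q+1)(k-2)$, but this alone only yields $|\mathcal{M}|\ge(q+1)(k-2)+1$, falling short of the target by $q$.

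To bridge the gap I would combine the projection lemma with a quantitative \emph{complement bound}: for every hyperplane $H$, $|\mathcal{M}\setminus H|\ge q+1$. Indeed, every hyperplane $H_{0}\ne H$ must intersect $\mathcal{M}\setminus H$, otherwise $\mathcal{M}\cap H_{0}\subseteq H\cap H_{0}$ would sit in a codimension-$2$ subspace, contradicting $\langle\mathcal{M}\cap H_{0}\rangle=H_{0}$; a standard incidence count between the $m:=|\mathcal{M}\setminus H|$ points and the $\frac{q^{k}-1}{q-1}-1$ hyperplanes different from $H$ then gives $m\ge q$, and the inequality is strict for $k\ge 3$ because any two points of $\mathcal{M}\setminus H$ share at least one common hyperplane other than $H$, producing unavoidable double-counting in the incidence sum.

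The \textbf{main obstacle} is then to synthesise these two ingredients, so as to add $|\mathcal{M}\cap H|\ge(q+1)(k-2)$ and $|\mathcal{M}\setminus H|\ge q+1$ to reach $(q+1)(k-1)$. The subtlety is that $\mathcal{M}\cap H$ need not itself be a strong blocking set in $H\cong\mathrm{PG}(k-2,q)$: the mandatory points of $\mathcal{M}$ produced by the strong blocking condition on pencils of hyperplanes through a codimension-$2$ subspace $L\subset H$ can all lie outside $H$. One cannot therefore simply invoke induction on $\mathcal{M}\cap H$. I would expect the argument of \cite{alfarano2020three} to circumvent this either by selecting $H$ and a point $R\in\mathcal{M}\setminus H$ so carefully that the induced subset $\mathcal{M}_{H}:=\{P\in H:\langle R,P\rangle\cap\mathcal{M}\ne\emptyset\}\supseteq\mathcal{M}\cap H$ (on which the projection lemma does apply) differs from $\mathcal{M}\cap H$ only in a controlled way, or by a global minimal-counterexample/averaging argument in which multisecant lines through a well-chosen $R$ are bookkept to recover the missing $q$ points. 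Carrying out this bookkeeping is the delicate step.
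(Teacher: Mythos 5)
The paper does not reprove Theorem~\ref{thm:lowerbound}: it is simply quoted from \cite{alfarano2020three}, so there is no ``paper's own proof'' to compare against. Your two ingredients are, individually, correct: the projection of a strong blocking set from any point is again a strong blocking set in $\mathrm{PG}(k-2,q)$ (this is exactly the argument that the strong blocking condition is stable under quotient by a point of a spanning hyperplane section), and the complement bound $|\mathcal{M}\setminus H|\geq q+1$ for $k\geq 3$ (equivalently, the minimum weight of the associated minimal code is at least $q+1$) follows from the incidence count you sketch, since two points outside $H$ lie on a common hyperplane distinct from $H$.

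The genuine gap is precisely the one you flag, and it is not a matter of ``delicate bookkeeping'' that could be deferred: as written, the two inequalities you want to add, $|\mathcal{M}\cap H|\geq (q+1)(k-2)$ and $|\mathcal{M}\setminus H|\geq q+1$, do not combine, because the first is \emph{not established}. The projection lemma bounds $|\pi_R(\mathcal{M}\setminus\{R\})|$, not $|\mathcal{M}\cap H|$; as you yourself observe, $\mathcal{M}\cap H$ need not be a strong blocking set of $H$, and indeed it can fail to span $H$ by more than one dimension from the intrinsic point of view. Adding back $|\mathcal{M}\setminus\{R\}|-|\pi_R(\mathcal{M}\setminus\{R\})|$, i.e.\ the collapse of the projection, would close the gap if one could guarantee a collapse of at least $q$ for a suitably chosen $R$, but nothing in the proposal establishes this; a priori every line through $R$ could meet $\mathcal{M}$ in at most two points, giving zero collapse and only $|\mathcal{M}|\geq (q+1)(k-2)+1$. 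The two alternative routes you gesture at (a carefully chosen induced subset $\mathcal{M}_H$, or a minimal-counterexample/averaging argument over multisecants) are left entirely unexecuted, so the inductive step from $(q+1)(k-2)$ to $(q+1)(k-1)$ is missing. Until that step is supplied, this is a plausible plan with correct lemmas but not a proof.
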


However, this lower bound is not sharp in general, as shown in \cite{alfarano2020three}.\\
There are many constructions of strong blocking sets of small size. One usual way to obtain them is to consider sets of lines: a set of lines of $\mathrm{PG}(k-1,q)$ is in higgledy-piggledy arrangement if the union of their point sets is a strong blocking set of $\mathrm{PG}(k-1,q)$. 
Using higgledy-piggledy lines one can get strong blocking sets of size $(q+1)(2k-3)$ in $\mathrm{PG}(k-1,q)$, whenever $ 2k\leq q+3$ (see \cite[Theorem 20]{zbMATH06340223}). Note that the above result cannot be used to obtain strong blocking sets for large dimensions. There are stronger results for small dimensions: see  \cite{zbMATH06340223,1930-5346_2011_1_119} for $k=3,4$, \cite{Bartoli_2020,Denaux} for $k = 5$,
and \cite{Bartoli:2020tk} for $k = 6$.

The best bounds concerning the size of smallest strong blocking sets are summarized in the following theorem, which is a refinement of a result in \cite{Chabanne_2014}.

\begin{theorem}[\!\!\cite{Heger:2021vr}]\label{thm:upperbound}
The size of the smallest strong blocking sets in $\mathrm{PG}(k-1,q)$ is at most
$$
\begin{cases}
\frac{2k-1}{\log_2(4/3)},& \textrm{ if } q=2;\\
(q+1)\left\lceil \frac{2}{1+\frac{1}{(q+1)^2\ln q}}(k-1)\right\rceil ,& \textrm{otherwise.}
\end{cases}$$
\end{theorem}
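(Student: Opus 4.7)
The plan is to apply the probabilistic method in each of the two cases, following and refining the line of argument introduced by Chabanne et al.\ \cite{Chabanne_2014}. The first step is a clean geometric reformulation: a subset $\mathcal{M} \subseteq \PG(k-1,q)$ is a strong blocking set if and only if for every pair $(L, H)$ consisting of a codimension-$2$ subspace $L$ and a hyperplane $H$ containing $L$, one has $\mathcal{M} \cap (H \setminus L) \ne \emptyset$; indeed $\langle \mathcal{M}\cap H\rangle \ne H$ precisely when $\mathcal{M}\cap H$ is contained in some codimension-$2$ flat of the ambient space. There are exactly $(q+1)\qbin{k}{2}{q}$ such pairs, and each \emph{shell} $H \setminus L$ carries $q^{k-2}$ projective points.

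For $q=2$ I would sample $t$ points of $\PG(k-1,2)$ uniformly and independently (with replacement). For a fixed pair $(L,H)$, a single sample lies in $H \setminus L$ with probability $2^{k-2}/(2^k-1) \geq 1/4$, so the probability that no sample does is at most $(3/4)^t$. A union bound over the $3\qbin{k}{2}{2} = (2^k-1)(2^{k-1}-1) < 2^{2k-1}$ pairs produces a failure probability bounded by $2^{2k-1}(3/4)^t$, which drops below $1$ as soon as $t > (2k-1)/\log_2(4/3)$. Hence $t = \lceil (2k-1)/\log_2(4/3)\rceil$ samples suffice to produce, with positive probability, a strong blocking set of size at most $t$.

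For $q \geq 3$ sampling single points gives a weaker constant, so instead I would sample $t$ lines $\ell_1, \ldots, \ell_t$ uniformly and independently at random from the set of lines of $\PG(k-1,q)$; their union has at most $(q+1)t$ points. The essential calculation is the probability $\alpha$ that a single random line satisfies $\ell \cap H \subseteq L$ for a fixed pair $(L,H)$: partitioning the ``bad'' lines into those contained in $L$ and those meeting $H$ at a single point lying in $L$, a direct Gaussian-binomial count yields
\[
\alpha \;=\; \frac{\qbin{k-2}{2}{q} \;+\; q^{k-2}\cdot \tfrac{q^{k-2}-1}{q-1}}{\qbin{k}{2}{q}},
\]
which is close to $1/q$ with an explicit subleading correction. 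A union bound then bounds the failure probability by $(q+1)\qbin{k}{2}{q}\cdot \alpha^t$; requiring this quantity to be strictly less than $1$, taking logarithms and isolating the next-order term in the expansion of $-\log \alpha$ produces the asserted threshold
\[
t \;\ge\; \left\lceil \frac{2(k-1)}{1 + \tfrac{1}{(q+1)^2 \ln q}} \right\rceil,
\]
and multiplying by $(q+1)$ gives the claimed bound on the size.

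The main obstacle is not the probabilistic model itself but rather the precise asymptotic analysis in the $q \geq 3$ case: the leading bound $t\approx 2(k-1)$ follows quickly from the heuristics $\alpha\approx 1/q$ and $\log\bigl((q+1)\qbin{k}{2}{q}\bigr) \approx (2k-1)\log q$, but pinning down the correction factor exactly as $\bigl(1 + \tfrac{1}{(q+1)^2 \ln q}\bigr)^{-1}$ requires a careful bookkeeping of the subleading terms in $\alpha$ and of the constants hidden in the count of pairs, so that the error contributions combine to give $(q+1)^2$ rather than $q^2$ in the denominator of the correction.
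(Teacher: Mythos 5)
Your approach (random points for $q=2$, random lines for $q\geq 3$, union bound over the flag pairs $(L,H)$) is exactly the mechanism behind the cited result of H\'eger and Nagy, and the setup — the reformulation of strong blocking via shells $H\setminus L$, the count $(q+1)\qbin{k}{2}{q}$ of pairs, and the expression you derive for $\alpha$ — is all correct. The one place where you stop short is the point you flag as the ``main obstacle,'' but it does not in fact require any delicate asymptotic expansion; two exact elementary inequalities close it. From
\[
\alpha \;=\; \frac{q^{k-2}-1}{q^{k-1}-1}\left(1-\frac{q^{k-3}(q-1)}{q^{k}-1}\right)
\;<\;\frac{1}{q}\left(1-\frac{q-1}{q^{3}}\right),
\]
and the observation that $(q-1)(q+1)^{2}=q^{3}+q^{2}-q-1>q^{3}$, i.e.\ $\frac{q-1}{q^{3}}>\frac{1}{(q+1)^{2}}$, one gets the clean lower bound $-\ln\alpha > \ln q + \frac{1}{(q+1)^{2}}$ (using $-\ln(1-x)\geq x$). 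On the other side, $(q+1)\qbin{k}{2}{q}=\frac{(q^{k}-1)(q^{k-1}-1)}{(q-1)^{2}}<\frac{q^{2k-1}}{(q-1)^{2}}\leq q^{2k-2}$ for all $q\geq 3$, since $(q-1)^{2}\geq q$ there; hence $\ln\bigl((q+1)\qbin{k}{2}{q}\bigr)< (2k-2)\ln q$. Dividing, the union-bound threshold is strictly below $\frac{(2k-2)\ln q}{\ln q+\frac{1}{(q+1)^{2}}}=\frac{2(k-1)}{1+\frac{1}{(q+1)^{2}\ln q}}$, so $t=\left\lceil \frac{2(k-1)}{1+\frac{1}{(q+1)^{2}\ln q}}\right\rceil$ lines suffice, giving a set of at most $(q+1)t$ points. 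In short: the $(q+1)^{2}$ is not the outcome of tracking second-order error terms in a Taylor expansion of $-\log\alpha$, but of the two one-line algebraic comparisons above; your worry that the analysis must ``combine error contributions to produce $(q+1)^{2}$'' overstates the difficulty. (For $q=2$ your argument is fine as written; note only that the threshold is slightly below $\frac{2k-1}{\log_2(4/3)}$ rather than equal to it, so the conclusion is really $t\leq\lceil \frac{2k-1}{\log_2(4/3)}\rceil$.)
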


The proof of the above theorem is probabilistic. Explicit small constructions can be found in \cite{alfarano2020three,alfarano2019geometric}: they are of size $ck^2q$,
for some $c \geq 2/9$. However, from Theorem~\ref{thm:lowerbound} and Theorem~\ref{thm:upperbound} we know that a construction where the size is linear in $k$ (and $q$) does exist. Note that this would yield explicit construction of asymptotically good families of minimal codes, by the lower bounds on the minimum weight (see \cite[Theorem 2.8]{alfarano2020three}). 

In \cite[\S 2.A]{cohen-zemor} (for $q=2$) and \cite[\S 2.4]{MR3644879} (for all others cases), such a construction is sketched, and it employs the concatenation. However, these results seem to have gone unnoticed to date, in the context of both minimal codes and strong blocking sets. The main aim of this paper is to highlight the power of this approach by delving into it.

\section{Strong blocking sets by concatenation}\label{sec:strong}

\subsection{Geometric interpretation of the Ashikhmin-Barg condition}

First, we provide an easy  geometrical interpretation of part 3 of \cite[Lemma 2.1]{ashikhmin1998minimal}, known as the  Ashikhmin-Barg condition, which provides a sufficient condition for a subset of points to be a strong blocking set. Even if it is a direct consequence of the geometrical interpretation of minimal codes described above, we give a direct proof using only geometrical arguments.

\begin{lemma}[Ashikhmin-Barg] Let $\mathcal{B}$ be a subset of $\mathrm{PG}(k-1,q)$ of cardinality $n$ and denote by $m$ (resp. $M$) the minimum (resp. the maximum) number of points of $\mathcal{B}$ contained in a hyperplane of $\mathrm{PG}(k-1,q)$. If 
\begin{equation}\label{eq:ABgeom}
\frac{n-M}{n-m}>\frac{q-1}{q}
\end{equation}
then $\mathcal{B}$ is a strong blocking set.
\end{lemma}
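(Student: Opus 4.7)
The plan is to argue by contrapositive: assume $\mathcal{B}$ is not a strong blocking set and derive the reverse inequality. If some hyperplane $\mathcal{H}$ satisfies $\langle \mathcal{B} \cap \mathcal{H} \rangle \subsetneq \mathcal{H}$, then the span of $\mathcal{B} \cap \mathcal{H}$ sits inside a subspace $\mathcal{S}$ of codimension $1$ in $\mathcal{H}$, i.e., codimension $2$ in $\mathrm{PG}(k-1,q)$. This subspace $\mathcal{S}$ will be the geometric object that encodes the obstruction.

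The key idea is then to look at the pencil of hyperplanes through $\mathcal{S}$. There are exactly $q+1$ such hyperplanes $\mathcal{H}_0 = \mathcal{H}, \mathcal{H}_1, \ldots, \mathcal{H}_q$; their union is all of $\mathrm{PG}(k-1,q)$, and each point outside $\mathcal{S}$ lies in exactly one of them. Writing $s := |\mathcal{B} \cap \mathcal{S}|$ and $b_i := |\mathcal{B} \cap \mathcal{H}_i|$, a double count of the points of $\mathcal{B}$ gives
\[
n \;=\; s + \sum_{i=0}^{q}(b_i - s) \;=\; \sum_{i=0}^{q} b_i - q\,s.
\]
Because $\mathcal{B} \cap \mathcal{H}_0 \subseteq \mathcal{S}$, one has $b_0 = s$, so the identity simplifies to $\sum_{i=1}^{q} b_i = n + (q-1)\,s$.

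Now I plug in the extremal bounds. Each $b_i$ with $i \geq 1$ is the size of the intersection of $\mathcal{B}$ with a hyperplane, hence $b_i \leq M$. On the other hand $s = b_0 \geq m$, again because $\mathcal{H}_0$ is a hyperplane. Combining,
\[
n + (q-1)\,m \;\leq\; n + (q-1)\,s \;=\; \sum_{i=1}^{q} b_i \;\leq\; q\,M,
\]
which rearranges to $q(M - m) \geq n - m$, i.e., $(n-M)/(n-m) \leq (q-1)/q$, contradicting \eqref{eq:ABgeom}. Thus $\mathcal{B}$ must be a strong blocking set.

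The only non-routine step is recognizing that the failure of the strong-blocking-set property produces a codimension-$2$ subspace containing $\mathcal{B} \cap \mathcal{H}$, so that the pencil structure becomes available; once that is in place, the counting argument and the two inequalities $b_i \leq M$, $s \geq m$ finish the proof immediately, with no further obstacle.
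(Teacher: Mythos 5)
Your proof is correct and follows essentially the same route as the paper's: assume failure, find the codimension-$2$ subspace $\mathcal{S}$, use the pencil of $q+1$ hyperplanes through $\mathcal{S}$, and count. The paper phrases the count more tersely as $n \leq x + q(M-x) \leq qM - (q-1)m$ with $x = |\mathcal{H}\cap\mathcal{B}|$, but this is the same inequality as your $n + (q-1)s \leq qM$.
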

\begin{proof}
Suppose that $\mathcal{B}\subseteq \mathrm{PG}(k-1,q)$ is not a strong blocking set. This means that not all the hyperplanes $\mathcal{H}$ are generated by their intersection $\mathcal{H}\cap \mathcal{B}$ with $\mathcal{B}$. Equivalently, there exists a hyperplane $\mathcal{H}$ such that $\mathcal{H}\cap \mathcal{B}$ is contained in a subspace $\mathcal{S}$ of codimension $2$. Recall that there are exactly $q+1$ distinct hyperplanes through a given subspace of codimension $2$. Consider the $q$ hyperplanes $\mathcal{H}_1,\ldots,\mathcal{H}_q$ through $\mathcal{S}$ distinct from $\mathcal{H}$. Let $x=|\mathcal{H}\cap \mathcal{B}|\geq m$. Thus $(\mathcal{H}_i\setminus \mathcal{S})\cap \mathcal{B}$ has size at most $M-x$ for each $i=1,\ldots,q$. Therefore 
$$n=|\mathcal{B}|\leq x+q(M-x)=qM-(q-1)x\leq qM-(q-1)m,$$
 a contradiction to \eqref{eq:ABgeom}.
\end{proof}

In coding theoretical language, \eqref{eq:ABgeom} reads as follows: let $\mathcal{C}$ be the (projective) linear code of length $n$ associated with $\mathcal{B}$. If 
\begin{equation}\label{eq:ABclass} \frac{d(\C)}{w(\C)}> \frac{q-1}{q}
\end{equation}
then $\mathcal{C}$ is minimal. In the sequel, we will refer to \eqref{eq:ABgeom} and \eqref{eq:ABclass} as AB condition. 

\subsection{Outer AB condition}

The concatenation process allows to get strong blocking sets by imposing a similar but weaker condition on the outer codes. 

\begin{theorem}[Outer AB condition]\label{th:meta}
Let $\C$ be an $[N,K,D]_{q^k}$  code such that $$\frac{D}{W}>\frac{q-1}{q},$$ where $W:=w(\C)$, and $\mathcal{I}$ be an $[n,k,d]_q$ minimal code. Then the concatenated code $\mathcal{I} \ \Box_\pi \ \mathcal{C}$ is a minimal code of parameters $[Nn,Kk,\geq Dd]_q$. Equivalently, the corresponding set in $\mathrm{PG}(Kk-1,q)$ is a strong blocking set of size $Nn$.
\end{theorem}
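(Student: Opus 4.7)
The parameters $[Nn, Kk, \geq Dd]_q$ of the concatenated code are standard and already recalled in Section~\ref{sec:background}, so the real content is the minimality; the strong blocking set assertion then follows from the code/projective system correspondence, with the length $Nn$ giving the size. My plan is to take an arbitrary nonzero codeword $c = (\pi(c_1),\ldots,\pi(c_N))$ of $\mathcal{I}\,\Box_\pi\,\mathcal{C}$, coming from $(c_1,\ldots,c_N) \in \mathcal{C}$, together with any nonzero $c' = (\pi(c'_1),\ldots,\pi(c'_N))$ satisfying $\sigma(c') \subseteq \sigma(c)$, and exhibit a $\mu \in \mathbb{F}_q^*$ with $c' = \mu c$.

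The first step is a block-by-block reduction using inner minimality. Since Hamming supports decompose along the $N$ blocks of length $n$, the inclusion $\sigma(c') \subseteq \sigma(c)$ forces $\sigma(\pi(c'_i)) \subseteq \sigma(\pi(c_i))$ for every $i \in \{1,\ldots,N\}$. As both $\pi(c_i)$ and $\pi(c'_i)$ lie in the $\mathbb{F}_q$-linear minimal code $\mathcal{I}$, Definition~\ref{def:mc} produces $\lambda_i \in \mathbb{F}_q$ such that $\pi(c'_i) = \lambda_i \pi(c_i)$ (and we may set $\lambda_i = 0$ on indices where $c_i = 0$). The $\mathbb{F}_q$-linearity and injectivity of $\pi$ then lift this to $c'_i = \lambda_i c_i$ in $\mathbb{F}_{q^k}$ for every $i$.

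The remaining task is to show that all the $\lambda_i$ with $c_i \neq 0$ coincide. Because $\mathbb{F}_q \subseteq \mathbb{F}_{q^k}$ and $\mathcal{C}$ is $\mathbb{F}_{q^k}$-linear, for every $\lambda \in \mathbb{F}_q$ the vector $c' - \lambda c$ belongs to $\mathcal{C}$, and its set of nonzero coordinates (viewed in $\mathbb{F}_{q^k}^N$) is exactly $\{i : c_i \neq 0 \text{ and } \lambda_i \neq \lambda\}$. Denote by $w$ the $\mathcal{C}$-weight of $c$, so that $D \leq w \leq W$, and set $n_\lambda := |\{i : c_i \neq 0,\ \lambda_i = \lambda\}|$. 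Whenever $c' \neq \lambda c$, the codeword $c' - \lambda c$ is nonzero and hence of weight $w - n_\lambda \geq D$. Assuming toward a contradiction that $c' \neq \lambda c$ for every $\lambda \in \mathbb{F}_q$, summing the inequalities $n_\lambda \leq w - D$ over $\lambda \in \mathbb{F}_q$ and using $\sum_{\lambda \in \mathbb{F}_q} n_\lambda = w$ yields $w \leq q(w-D)$, that is, $qD \leq (q-1)w \leq (q-1)W$, contradicting the outer AB hypothesis $D/W > (q-1)/q$. Hence $c' = \lambda c$ for some $\lambda \in \mathbb{F}_q$, which is nonzero since $c'$ is.

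The main subtlety I anticipate is the interface between the two scales $\mathbb{F}_q$ and $\mathbb{F}_{q^k}$: one must ensure that the coefficients $\lambda_i$ supplied by inner minimality genuinely belong to $\mathbb{F}_q$, not merely to $\mathbb{F}_{q^k}$, so that the auxiliary vectors $c' - \lambda c$ still lie in the outer code $\mathcal{C}$ when $\lambda$ ranges over $\mathbb{F}_q$. Once this is in place, the final counting is a clean adaptation of the $q$-line averaging that drives the classical Ashikhmin-Barg proof, but applied at the level of the outer code; this is precisely why it is enough to impose a single AB-type inequality over $\mathbb{F}_q$ (rather than over $\mathbb{F}_{q^k}$) on the outer code $\mathcal{C}$ to guarantee that the whole concatenation is minimal.
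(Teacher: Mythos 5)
Your proof is correct, and the first step is identical to the paper's: you use inner minimality block by block to reduce the problem to scalars $\lambda_i\in\mathbb{F}_q$ relating the two outer codewords, exactly as the paper does. Where you diverge is in the finishing argument. The paper applies pigeonhole: there are at least $D$ indices $i$ in $\sigma(c')$ with $\lambda_i\in\mathbb{F}_q^*$, and only $q-1$ possible values, so some value $\lambda$ occurs at least $\left\lceil D/(q-1)\right\rceil$ times; after rescaling by $\lambda^{-1}$ one bounds
\[
|\sigma(c-c')|\le W-\left\lceil \tfrac{D}{q-1}\right\rceil<D,
\]
which forces $c=c'$ by the minimum distance of $\mathcal{C}$. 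You instead allow $\lambda_i=0$ on the complement of the support, sum the inequalities $n_\lambda\le w-D$ over all $\lambda\in\mathbb{F}_q$, and reach $qD\le(q-1)w\le(q-1)W$ directly. Both arguments use the same hypothesis $D/W>(q-1)/q$ and the same structural reduction; your averaging version mirrors the classical Ashikhmin--Barg proof more transparently and avoids the ceiling-function bookkeeping, while the paper's pigeonhole variant makes explicit the number of coordinates where $c$ and $\lambda c'$ agree, which is the quantity it then bounds. The two are equivalent in content but genuinely different in execution for the counting step.
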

\begin{proof}
Let $x$ and $x'$ be two nonzero  codewords in $\mathcal{I} \ \Box_\pi \ \mathcal{C}$ such that $\sigma(x')\subseteq \sigma(x)$. Let $c$ and $c^{\prime}$ be the codewords in $\C$ such that  $x=(\pi(c_1),\ldots,\pi(c_N))$ and $x'=(\pi(c^{\prime}_1),\ldots,\pi(c^{\prime}_N))$. Clearly $\sigma(c^{\prime})\subseteq \sigma(c)$. Moreover, since $\mathcal{I}$ is minimal, there exists $\lambda_i\in \F_q^*$ such that $c_i=\lambda_i c^{\prime}_i$ whenever $i\in \sigma(c^{\prime})$. Since $\wt(c^{\prime})\geq D$, the scalars are equal to a certain $\lambda\in \F_q^*$ in at least $\left\lceil \frac{D}{q-1}\right\rceil $ coordinates. So, up to a multiplication by $\lambda^{-1}$, $c$ and $c^{\prime}$ coincide in at least $\left\lceil \frac{D}{q-1}\right\rceil $ coordinates.

Now 
$$D\leq \wt(c^{\prime})\leq \wt(c)\leq W$$
and, since $D/W>(q-1)/q$,
$$W<\frac{q}{q-1}\cdot D=D+\frac{D}{q-1}\leq  D+\left\lceil \frac{D}{q-1}\right\rceil.  $$
Thus 
$$\wt(c-c^{\prime})\leq W-\left\lceil \frac{D}{q-1}\right\rceil<D,$$
which yields $\wt(c-c^{\prime})=0$ and $c=c^{\prime}$. Hence $\mathcal{I} \ \Box_\pi \ \mathcal{C}$ is minimal.
\end{proof}

\begin{remark}
As one can easily see from the proof of Theorem \ref{th:meta}, we may get the same result by concatenating in each coordinate with a different minimal code.
\end{remark}

It is possible to provide a more geometrical interpretation of the concatenation process. In this geometrical description of concatenated codes, companion matrices play a crucial role. Let 
 $p(x)=x^k+\sum_{i=0}^{k-1}p_ix^i\in \mathbb{F}_q[x]$ be an irreducible monic polynomial of positive degree $k$ and define the companion matrix of $p(x)$ as 
$$A:=
\begin{pmatrix}
0&1&0&\cdots&0\\
0&0&1&\cdots&0\\
\vdots&\vdots&\vdots&&\vdots\\
0&0&0&\cdots&1\\
-p_0&-p_1&-p_2&\cdots&-p_{k-1}\\
\end{pmatrix}.$$
The $\mathbb{F}_q$-algebra $\mathbb{F}_q[A]$ is a finite field with $q^k$ elements (see \cite[Chapter 2.5]{lidl_niederreiter_1994}). In particular, if $\alpha$ is a root of $p(x)$ and $v=v_0+v_1\alpha+\ldots+v_{k-1}\alpha^{k-1}$ is a generic element of $\F_q[\alpha]\cong \F_{q^k}$ and $\phi:\F_q[\alpha]\to \F_q^k$ is the $\F_q$-linear isomorphism that sends $v$ to $[v_0,\ldots,v_{k-1}]$, then $\phi(v)A=\phi(v\alpha)$.

\begin{remark}[Geometric insight of Theorem \ref{th:meta}]
Let $G_{\mathcal{I}}\in \mathbb{F}_{q}^{k\times n}$ be a generator matrix of the inner code $\mathcal{I}$ and let $\mathbb{F}_{q^k}^*=\langle \omega\rangle$. Denote by $A\in \mathbb{F}_q^{k\times k}$ the companion matrix of the minimal polynomial of $\omega$. For an element $\alpha \in \mathbb{F}_{q^k}$ let 
$$A(\alpha):= 
\begin{cases}
A^{r} \in \mathbb{F}_q^{k\times k},& \textrm{if } 0\neq \alpha=\omega^r,\\
{\bf 0} \in \mathbb{F}_q^{k\times k},& \textrm{if } \alpha= 0.
\end{cases}
$$
Consider now a generator matrix $G_{\mathcal{C}}\in\mathbb{F}_{q^k}^{K\times N}$ of the outer code $\mathcal{C}$
$$
G_{\mathcal{C}}=
\begin{pmatrix}
\alpha_{1,1}&\alpha_{1,2}&\cdots &\alpha_{1,N}\\
\alpha_{2,1}&\alpha_{2,2}&\cdots &\alpha_{2,N}\\
\vdots& \vdots& &\vdots\\
\alpha_{K,1}&\alpha_{K,2}&\cdots &\alpha_{K,N}\\
\end{pmatrix}.
$$
A generator matrix of a concatenated code $\mathcal{I} \ \Box_\phi \ \mathcal{C}$ can be chosen as follows 
\begin{equation}\label{eq:G}
\begin{pmatrix}
A(\alpha_{1,1})G_{\mathcal{I}}&A(\alpha_{1,2})G_{\mathcal{I}}&\cdots &A(\alpha_{1,N})G_{\mathcal{I}}\\
A(\alpha_{2,1})G_{\mathcal{I}}&A(\alpha_{2,2})G_{\mathcal{I}}&\cdots &A(\alpha_{2,N})G_{\mathcal{I}}\\
\vdots& \vdots& &\vdots\\
A(\alpha_{K,1})G_{\mathcal{I}}&A(\alpha_{K,2})G_{\mathcal{I}}&\cdots &A(\alpha_{K,N})G_{\mathcal{I}}\\
\end{pmatrix}\in \mathbb{F}_q^{Kk\times Nn}.
\end{equation}

Let 
\begin{itemize}
    \item $\mathcal{P}:=\{P_{x+ny} \mid x\in\{1,\ldots,n\}, y\in \{0,\ldots,N-1\}\}$ be the set of points in $\mathrm{PG}(Kk-1,q)$ corresponding to the columns of \eqref{eq:G};
    \item $\mathcal{Q}:=\{Q_1,\ldots,Q_n\}$ be the set of points in $\mathrm{PG}(k-1,q)$  corresponding to the columns of $G_\mathcal{I}$;
    \item $\mathcal{R}:=\{R_1,\ldots,R_N\}$ be the set of points in $\mathrm{PG}(K-1,q^k)$ corresponding to the columns of $G_\C$.
\end{itemize}

Let $\mathcal{H}$ and $\mathcal{H}^{\prime}$ be two hyperplanes of $\mathrm{PG}(Kk-1,q)$ such that $\mathcal{P}\cap \mathcal{H} \subseteq \mathcal{P}\cap \mathcal{H}^{\prime}$. We aim to prove that  $\mathcal{H}=\mathcal{H}'$, which is equivalent to $\mathcal{P}$ being a strong blocking set (see for example \cite[Proposition 3.3.]{alfarano2019geometric}) 

Let $v$ and $v'$ be two vectors in $\F_q^{Kk}$ such that $\mathcal{H}=\langle v\rangle^\perp$ and $\mathcal{H}'=\langle v'\rangle^\perp$ and let
$$v=(v_1|\ldots |v_K), \ \ \ v'=(v'_1|\ldots |v'_K),$$
with $v_i,v'_i\in \F_q^k$. In the following, if $\mathcal{H}=\langle v\rangle^\perp$,  we denote by $\mathcal{H}(P)$  the value $\sum_{i=1}^k v_i P_i $, where $P=(P_1,\ldots,P_K)$.

Consider $y\in \{0,\ldots,N-1\}$.

First we want to prove that there exists $\lambda_y \in \mathbb F_q^*$ such that
\[ \sum_{i=1}^K v_i A(\alpha_{i,y+1}) = \lambda_y \sum_{i=1}^K v'_i A(\alpha_{i,y+1}).\]

To this aim, consider for any $x\in \{1,\ldots,n\}$
$$\mathcal{H}(P_{x+ny})=\sum _{i=1}^{K}\underbrace{v_i}_{\in \mathbb{F}_q^{1\times k}}\underbrace{A(\alpha_{i,y+1})}_{\in \mathbb{F}_q^{k\times k}}\underbrace{Q_x}_{\in \mathbb{F}_q^{k\times 1}}=\left(\sum _{i=1}^{K}v_i A(\alpha_{i,y+1}) \right) Q_x$$
and similarly 
$$\mathcal{H}'(P_{x+ny})=\sum _{i=1}^{K}\underbrace{v'_i}_{\in \mathbb{F}_q^{1\times k}}\underbrace{A(\alpha_{i,y+1})}_{\in \mathbb{F}_q^{k\times k}}\underbrace{Q_x}_{\in \mathbb{F}_q^{k\times 1}}=\left(\sum _{i=1}^{K}v'_i A(\alpha_{i,y+1}) \right) Q_x.$$
 Now,
$$\left(\sum _{i=1}^{K}v_i A(\alpha_{i,y+1}) \right) \textrm{ and } \left(\sum _{i=1}^{K}v'_i A(\alpha_{i,y+1}) \right) $$
both correspond (if non-vanishing) to  hyperplanes $\pi$ and $\pi^{\prime}$ in $\mathrm{PG}(k-1,q)$.

Since $\mathcal{P}\cap \mathcal{H} \subseteq \mathcal{P}\cap \mathcal{H}^{\prime}$, if  $\mathcal{H}(P_{x+ny})=0$ then $\mathcal{H}'(P_{x+ny})=0$ too. Thus, $(\pi\cap \mathcal{Q})\subset (\pi^{\prime}\cap \mathcal{Q})$. 
Recall that $\mathcal{Q}$ is a strong blocking set in $\mathrm{PG}(k-1,q)$ and therefore there exists $\lambda_y\in \mathbb{F}_q^*$ such that 
\[ \sum_{i=1}^K v_i A(\alpha_{i,y+1}) = \lambda_y \sum_{i=1}^K v'_i A(\alpha_{i,y+1}).\]

Writing $v_i = \phi(\eta_i)$ and $v'_i = \phi(\eta'_i)$, this implies that
\begin{align*}
\sum_{i=1}^K \phi(\eta_i \alpha_{i,y+1}) = \lambda_y \sum_{i=1}^K \phi( \eta'_i \alpha_{i,y+1})
&&\Longrightarrow&&
\sum_{i=1}^K \eta_i \alpha_{i,y+1} = \lambda_y \sum_{i=1}^K \eta'_i \alpha_{i,y+1},
\end{align*}
 by the $\mathbb F_q$-linearity and injectivity of $\phi$.

Now we consider $\eta=(\eta_1,\ldots,\eta_K)$, $\eta^{\prime}=(\eta_1^{\prime},\ldots,\eta_K^{\prime})$ and   the hyperplanes $\overline{\mathcal{H}}=\langle \eta\rangle^\perp$, $\overline{\mathcal{H}}'=\langle \eta'\rangle^\perp$ in $\mathrm{PG}(K-1,q^k)$ corresponding to $\eta$ and $\eta'$ respectively.

Suppose that there is no $\lambda_y\in \mathbb{F}_q^*$ such that $\eta=\lambda_y \eta^{\prime}$. Then there is $\lambda_y$ such that the hyperplane  $(\eta-\lambda_y \eta^{\prime})^{\perp}$ has at least $\lceil N/(q-1)\rceil$ points in common with $\mathcal{R}$. By hypothesis, the maximum number of points of $\mathcal{R}$ on a hyperplane is $N-D$ and thus $N/(q-1)\leq N-D$, which gives $D/N\leq (q-2)/(q-1)$, a contradiction to $D/N> (q-1)/q$. 

This means that $\eta=\lambda_y \eta^{\prime}$ for some $\lambda_y\in \mathbb{F}_q^*$ and  so  $\mathcal{H}=\mathcal{H}'$ and hence the set $\mathcal{P}$ is a strong blocking set.
\end{remark}

\begin{remark}
We can reinterpret the construction in \cite[Theorem 4.1.]{alfarano2020three} in terms of concatenation. Actually, if $\omega$ is a primitive element of $\F_{q^k}$, that construction is the concatenation of a $[4,2,3]_{q^k}$ MDS code $\C$ with generator matrix
$$G_\C=\begin{bmatrix}1&1&1&0\\0&\omega^i&\omega^j&1\end{bmatrix},$$
where $0<i<j<q^k$, with a simplex code $\mathcal{S}_q(k)$. Theorem \ref{th:meta} affirms that $\mathcal{S}_q(k) \ \Box \ \C$ is minimal if $q<4$. However, by \cite[Theorem  4.1.]{alfarano2020three} we have that $\mathcal{S}_q(k) \ \Box \ \C$ is minimal whenever $i-j\not\equiv 0 \bmod \frac{q^s-1}{q-1}$ for every $s>1$ dividing $k$. This provides an infinite family of minimal codes obtained with concatenation, where the outer codes do not satisfy the Outer AB condition.
\end{remark}

In view of Theorem \ref{th:meta}, it is important to have explicit constructions of codes satisfying the Outer AB condition. This is the case, for example, of some MDS codes, as the following result shows.

\begin{corollary}\label{thm:MDSconcat}
Let $k$ be an integer greater than $1$. For $K\leq q^{k-1}+1$ and $N:=qK-(q-1)$, let $\C$ be an $[N,K,(q-1)(K-1)+1]_{q^k}$ MDS code (which exists under this hypothesis on $K$) and $\mathcal{I}$ be an $[n,k,d]_q$ minimal code. Then the concatenation $\mathcal{I} \ \Box \ \mathcal{C}$ of $\C$ with $\mathcal{I}$ is a  $[(qK-q+1)n,Kk,\geq ((q-1)(K-1)+1)d]_q$ minimal code.

Thus, there exists a strong blocking set in $\mathrm{PG}(Kk-1,q)$ of size $\displaystyle nqK-qn+n$, for any $K\leq q^{k-1}+1$.
\end{corollary}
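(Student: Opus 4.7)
The strategy is to deduce the corollary as a direct application of Theorem \ref{th:meta} (the Outer AB condition), so the task reduces to two verifications: first, that an $[N, K, (q-1)(K-1)+1]_{q^k}$ MDS code actually exists under the hypothesis $K \leq q^{k-1}+1$; second, that such a code satisfies $D/W > (q-1)/q$, so that Theorem \ref{th:meta} applies directly with the given inner minimal code $\mathcal{I}$.

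For existence, I would invoke extended Reed--Solomon codes over $\mathbb{F}_{q^k}$, which provide $[N, K, N-K+1]_{q^k}$ MDS codes for any $N \leq q^k + 1$. Since $N = qK - q + 1 \leq q^k + 1$ is equivalent to $K \leq q^{k-1} + 1$, the hypothesis on $K$ is exactly what is needed for existence, and the designed minimum distance is $N - K + 1 = (qK - q + 1) - K + 1 = (q-1)(K-1)+1$, matching the stated $D$.

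To check the Outer AB condition, I would use $W \leq N$ (valid for any code) and reduce to proving $qD > (q-1)N$. Substituting the values, the computation is
$$qD - (q-1)N = q\bigl[(q-1)(K-1)+1\bigr] - (q-1)(qK - q + 1) = 1,$$
which is strictly positive, so $D/W \geq D/N > (q-1)/q$. Theorem \ref{th:meta} then gives that $\mathcal{I} \ \Box \ \mathcal{C}$ is a minimal code, and the standard parameters of a concatenated code yield length $Nn = (qK - q + 1)n$, dimension $Kk$, and minimum distance at least $Dd = ((q-1)(K-1)+1)d$.

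The final claim about the strong blocking set is then immediate from the correspondence between minimal codes and strong blocking sets recalled in Section \ref{sec:strongblockingsets}: the projective system of $\mathcal{I} \ \Box \ \mathcal{C}$ yields a strong blocking set in $\mathrm{PG}(Kk - 1, q)$, of size $Nn = nqK - qn + n$. The one potentially delicate point here is the projectivity of the concatenated code (so that the ``size'' is exact rather than an upper bound via multiplicities), but this follows since $\mathcal{I}$ is projective (as a minimal code) and $\mathcal{C}$ is projective as an MDS code with $K \geq 2$; the main work is really the short algebraic verification above.
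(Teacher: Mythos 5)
Your proposal is correct and takes essentially the same approach as the paper: apply Theorem~\ref{th:meta} after checking $D/W \geq D/N > (q-1)/q$ (the paper states this inequality directly; your computation $qD-(q-1)N=1$ makes it explicit). The additional remarks on existence via extended Reed--Solomon codes and on projectivity of the concatenated code are not in the paper's one-line proof but are sound and helpful.
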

\begin{proof}
It follows directly from Theorem \ref{th:meta}, since, with the same notations as in Theorem \ref{th:meta},
$$\frac{D}{W}\geq \frac{D}{N}=\frac{(q-1)(K-1)+1}{N}>\frac{q-1}{q}.$$
\end{proof}

If one is interested in getting small strong blocking sets, the shorter the inner codes are, the better it is. On the other hand, concatenating with simplex codes has the great advantage that it is easy to get the weight distribution of the concatenated code. Actually, if $A_i(\C)=|\{c\in \C\mid \wt(c)=i\}|$, then
$$A_{q^{k-1}i}(\mathcal{S}_q(k) \ \Box \ \mathcal{C})=|\{c\in \mathcal{S}_q(k)  \ \Box \ \mathcal{C}\mid \wt(c)=q^{k-1}i\}|=A_i(\C),$$
and $A_{j}(\mathcal{S}_q(k) \ \Box \ \mathcal{C})=0$ for all $j$ which are not multiples of $q^{k-1}$.

\begin{corollary}\label{Th:metaconstruction}
Let $\mathcal{C}$ be an $[N,K,D]_{q^k}$ linear code with maximum weight $W$. Suppose that $D/W>(q-1)/q$. Then there exists a minimal $[N(q^k-1)/(q-1),Kk,Dq^{k-1}]_q$ code $\mathcal{D}$. Equivalently, in $\mathrm{PG}(Kk-1,q)$ there exists a strong blocking set of size  $N(q^k-1)/(q-1).$
\end{corollary}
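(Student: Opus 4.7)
The plan is to apply Theorem~\ref{th:meta} with a carefully chosen inner code, namely the simplex code $\mathcal{S}_q(k)$. Recall from Example~\ref{example:simplexcode} that $\mathcal{S}_q(k)$ has parameters $[(q^k-1)/(q-1),k,q^{k-1}]_q$ and is minimal (indeed, all its nonzero codewords share the same Hamming weight $q^{k-1}$). Since the hypothesis $D/W>(q-1)/q$ is precisely the Outer AB condition required by Theorem~\ref{th:meta}, the concatenated code $\mathcal{D}:=\mathcal{S}_q(k)\,\Box\,\mathcal{C}$ is minimal.

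Next, I would verify the parameters. The length and dimension are immediate from the definition of concatenation: length $N\cdot(q^k-1)/(q-1)$ and $\F_q$-dimension $Kk$. For the minimum distance, the generic bound given in Section~\ref{sec:background} only yields $\ge Dq^{k-1}$; however, since the inner code is constant-weight with weight $q^{k-1}$, every nonzero coordinate of an outer codeword $c\in\mathcal{C}$ contributes exactly $q^{k-1}$ to the Hamming weight of the corresponding codeword of $\mathcal{D}$, while zero coordinates contribute nothing. Hence a codeword of $\mathcal{D}$ coming from $c\in\mathcal{C}$ with $\wH(c)=w$ has weight exactly $wq^{k-1}$, so the minimum distance of $\mathcal{D}$ is exactly $Dq^{k-1}$ (this also gives the weight-distribution identity stated just before the corollary).

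Finally, the equivalent statement about strong blocking sets follows at once from the correspondence recalled in Section~\ref{sec:strongblockingsets}: the columns of a generator matrix of the projective version of $\mathcal{D}$ (which one may assume projective, since $\mathcal{S}_q(k)$ is projective) form a strong blocking set in $\mathrm{PG}(Kk-1,q)$ of size equal to the length of $\mathcal{D}$, namely $N(q^k-1)/(q-1)$.

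There is essentially no obstacle here: the result is a direct specialization of Theorem~\ref{th:meta}. The only nuance worth spelling out is the sharp value $Dq^{k-1}$ of the minimum distance, which relies on the constant-weight property of the simplex code rather than on the generic concatenation bound.
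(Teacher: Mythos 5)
Your proof is correct and follows exactly the paper's own route: specialize Theorem~\ref{th:meta} by taking the simplex code $\mathcal{S}_q(k)$ as the inner (minimal) code. The extra remark that constant-weightness of the simplex code upgrades the generic bound $\geq Dq^{k-1}$ to an exact minimum distance $Dq^{k-1}$ is a welcome detail that the paper's one-line proof leaves implicit.
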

\begin{proof}
This is a straightforward consequence of Theorem \ref{th:meta}, recalling that the simplex code of parameters $[(q^k-1)/(q-1),k,q^{k-1}]_{q}$ is in particular a minimal code.
\end{proof}

\begin{remark}
When concatenating with a simplex code of dimension $k$, it is worth noting that the outer code in our construction is required have relative distance larger than $1-1/q$. Therefore  its rate is smaller than $1/q+1/n$ and thus the rate of the concatenated code is smaller than  
$$\frac{k(q-1)}{q^k-1}\left(\frac{1}{q}+\frac{1}{n}\right)\leq \frac{2}{q+1}\left(\frac{1}{q}+\frac{1}{n}\right).$$
The size of the corresponding strong blocking set is lower bounded by 
$$\frac{q(q+1)}{2}\cdot k-q.$$
One can, in principle, search for outer codes having a slightly larger relative minimum weight, say $(1-1/q)^{1+\epsilon}$, and concatenate them with an inner code with relative distance (or ratio between minimum and maximum weight) at least $(1-1/q)^{-\epsilon}$ and rate  larger than $\frac{k(q-1)}{q^k-1}$. In this way one obtains constructions of  shorter minimal codes and so of smaller strong blocking sets. 
\end{remark}

\section{Asymptotic results}\label{sec:asymptotic}

\subsection{Non-constructive}

By the Gilbert-Varshamov Bound (see \cite[\S 1.3.2]{tsfasman1991algebraic}), for any $0 \leq \delta \leq 1 -1/q$ and $0<\gamma \leq 1 - H_q(\delta)$ there exists a $q$-ary linear code of relative minimum weight $\delta$ and rate at least $1 - H_q(\delta)-\gamma$, where $$H_q(x) := x \log_q(q - 1) - x \log_q(x) - (1 - x) \log_q(1 -x).$$ 

The following theorem shows that it is possible to construct for any $q$ an infinite family of strong blocking sets of size linear in the dimension $k$.

\begin{theorem}
For any prime power $q>2$ and any $\epsilon \in (0,1-1/\ln q)$, there exists an integer $k_{q,\epsilon}$ such that  $\mathrm{PG}(k_{q,\epsilon}-1,q)$ possesses a strong blocking set of size smaller than
$$k_{q,\epsilon}\cdot  \frac{q^{1+\epsilon}(q+1)}{\left(1-\epsilon -\frac{1}{\ln q}\right)}.$$
\end{theorem}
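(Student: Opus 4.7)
The plan is to apply Corollary~\ref{Th:metaconstruction} with an outer code furnished by the Gilbert--Varshamov bound and the simplex code $\mathcal{S}_q(k)$ as the inner code. I would set the target outer relative minimum distance to be $\delta := 1 - q^{-(1+2\epsilon)}$. Since $\epsilon > 0$, we have $\delta > 1 - 1/q$, so any outer $\F_{q^k}$-linear code of relative minimum distance at least $\delta$ automatically satisfies the Outer AB condition (using that the maximum weight of a linear code is bounded by its length $N$). Choosing the extension degree $k$ as a positive integer with $k > 1 + 2\epsilon$ ensures $\delta < 1 - 1/q^k$, so that the GV bound over $\F_{q^k}$ produces a code of positive rate.

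By GV, for any sufficiently large $N$ there exists an $[N, K, D]_{q^k}$-linear code $\mathcal{C}$ with $D/N \geq \delta$ and rate $R := K/N \geq 1 - H_{q^k}(\delta)$. Applying Corollary~\ref{Th:metaconstruction} to the concatenation of $\mathcal{C}$ with $\mathcal{S}_q(k)$, we obtain a strong blocking set in $\mathrm{PG}(Kk - 1, q)$ of size $N(q^k - 1)/(q-1)$. Setting $k_{q,\epsilon} := Kk$, the theorem reduces to the ratio inequality
\[
\frac{q^k - 1}{R\, k\, (q-1)} \leq \frac{q^{1+2\epsilon}(q+1)}{1-\epsilon}.
\]

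The remaining step is to lower-bound the GV rate. Expanding $H_{q^k}$ near $\delta = 1 - q^{-(1+2\epsilon)}$ yields (to leading order in $1/k$ and $1/q^k$) an estimate of the form
\[
R \;\geq\; q^{-(1+2\epsilon)}\cdot\frac{k - 1 - 2\epsilon}{k},
\]
so the size-to-dimension ratio is at most $(q^k-1)\,q^{1+2\epsilon}/((k-1-2\epsilon)(q-1))$. Substituting this reduces the target to an inequality of the form $(q^k-1)(1-\epsilon) \leq (k-1-2\epsilon)(q^2-1)$ (up to lower-order corrections), which can be realized by an appropriate integer $k = k(\epsilon,q)$.

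The main obstacle is exactly this choice of $k$: as $k$ grows, the GV rate approaches the Singleton value $q^{-(1+2\epsilon)}$ (tightening the estimate), but the factor $q^k$ in the numerator grows exponentially; conversely, as $k$ decreases toward $1+2\epsilon$, the denominator $k-1-2\epsilon$ shrinks. Finding a finite optimal $k$ for every $\epsilon \in (0,1]$, together with the precise control of the entropy $H_{q^k}(\delta)$ required for the rate estimate to be tight enough, is the technical heart of the argument.
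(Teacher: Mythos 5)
Your overall strategy (Gilbert--Varshamov outer code concatenated with a simplex inner code via Corollary~\ref{Th:metaconstruction}) is the same as the paper's, but a key parameter choice is off and the resulting gap cannot be closed by optimizing $k$. The paper does \emph{not} vary the inner dimension $k$ at all: it fixes $k=2$, concatenating with $\mathcal{S}_q(2)$ (length $q+1$), and it sets the target outer relative distance to
$$\delta := 1 - \frac{1}{q^{1+\epsilon}},$$
\emph{not} $1 - q^{-(1+2\epsilon)}$. Because $\epsilon \le 1$, this $\delta$ never exceeds $1 - q^{-2}$, so GV over $\F_{q^2}$ applies for every $\epsilon \in (0,1]$, and the issue you flag about needing larger $k$ disappears entirely. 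The slack between the $q^{1+\epsilon}$ in $\delta$ and the $q^{1+2\epsilon}$ in the claimed size is deliberate: the extra $q^\epsilon$ is what absorbs the lower-order correction terms when one expands $1-H_{q^2}(\delta)$, yielding a rate $\ge \frac{1-\epsilon}{2q^{1+2\epsilon}}$ for the outer code, and after the factor $\frac{2}{q+1}$ from concatenation one lands exactly on $k_{q,\epsilon}\cdot \frac{q^{1+2\epsilon}(q+1)}{1-\epsilon}$.

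With your choice $\delta = 1 - q^{-(1+2\epsilon)}$ the arithmetic does not close. Even taking your own rate estimate $R \gtrsim q^{-(1+2\epsilon)} \cdot \frac{k-1-2\epsilon}{k}$ at face value, the size/dimension ratio of the concatenated code becomes roughly $\frac{(q^k-1)\,q^{1+2\epsilon}}{(q-1)(k-1-2\epsilon)}$, and for the target bound you would need $(1-\epsilon)(q^k-1) \le (k-1-2\epsilon)(q^2-1)$. For $k=2$ this reads $1-\epsilon \le 1-2\epsilon$, false for $\epsilon>0$; for $k=3$ it reads $(1-\epsilon)(q^2+q+1) \le (2-2\epsilon)(q+1)$, i.e. $q^2 - q - 1 \le 0$, false for $q\ge 2$; and larger $k$ only blow up the factor $q^k$. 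So the ``technical heart'' you describe is not merely hard --- it does not exist for your $\delta$. The fix is to give yourself the $q^\epsilon$ cushion in $\delta$ as the paper does, after which $k=2$ works uniformly and no optimization is required.
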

\begin{proof}
Fix $\delta = 1 -1/q^{1+\epsilon}\leq 1-1/q^2$, for $\epsilon\in (0,1-1/\ln q)$. By the Gilbert-Varshamov Bound, with respect to 
$$\gamma<\min \left\{1-H_{q^2}(\delta),\left(1-\frac{1}{q^{1+\epsilon}}\right)\left(1-\frac{\log_q(q^2-1)}{2}\right)\right \}, $$ there exists  a $q^2$-ary linear code $\mathcal{C}$ of dimension $k_{q,\epsilon}$ with relative minimum weight $\delta$ and rate at least

\begin{eqnarray*}
1 - H_{q^2}(\delta)-\gamma&=&1-\left(1 -\frac{1}{q^{1+\epsilon}}\right) \log_{q^2}\frac{q^2 - 1}{\left(1 -\frac{1}{q^{1+\epsilon}}\right)} +\frac{1}{q^{1+\epsilon}}\log_{q^2}\left(\frac{1}{q^{1+\epsilon}}\right)-\gamma\\
&=&1-\left(1 -\frac{1}{q^{1+\epsilon}}\right) \left( \frac{\log_q (q^2-1)}{\log_q q^2}-\frac{\log_q \left(1 -\frac{1}{q^{1+\epsilon}}\right)}{\log_q q^2}\right) -\frac{1+\epsilon}{2q^{1+\epsilon}}-\gamma\\
&\geq &1-\left(1 -\frac{1}{q^{1+\epsilon}}\right) \left( 1-\frac{\log_q \left(1 -\frac{1}{q^{1+\epsilon}}\right)}{2}\right) -\frac{1+\epsilon}{2q^{1+\epsilon}}\\
&= &\frac{1-\epsilon}{2q^{1+\epsilon}}+\frac{1}{2}\left(1 -\frac{1}{q^{1+\epsilon}}\right) \log_q \left(1 -\frac{1}{q^{1+\epsilon}}\right)\\
&\geq  &\frac{1}{2q^{1+\epsilon}}\left(1-\epsilon -\frac{1}{\ln q}\right)>0,\\
\end{eqnarray*}
since 
$$\left(1 -\frac{1}{q^{1+\epsilon}}\right) \log_q \left(1 -\frac{1}{q^{1+\epsilon}}\right)=\frac{1}{\ln q}\left(1 -\frac{1}{q^{1+\epsilon}}\right) \ln \left(1 -\frac{1}{q^{1+\epsilon}}\right)>-\frac{1}{q^{1+\epsilon}\ln q}.$$
By Corollary~\ref{Th:metaconstruction}, there exists a minimal $q$-ary code of rate larger than
$$\frac{2}{q+1}\cdot \frac{1}{2q^{1+\epsilon}}\left(1-\epsilon -\frac{1}{\ln q}\right)= \frac{1}{q^{1+\epsilon}(q+1)}\left(1-\epsilon -\frac{1}{\ln q}\right)$$
and therefore a strong blocking set in $\mathrm{PG}(k_{q,\epsilon}-1,q)$ of size smaller than
$$k_{q,\epsilon}\cdot  \frac{q^{1+\epsilon}(q+1)}{\left(1-\epsilon -\frac{1}{\ln q}\right)} .$$
\end{proof}

\subsection{Explicit constructions}\label{subsection:explicit constructions}

As we have just seen, Corollary~\ref{Th:metaconstruction} and the Gilbert-Varshamov bound imply the existence of strong blocking set of size linear in the dimension and quadratic in size of the field. In this subsection, exploiting explicit constructions of asymptotically good AG codes, we provide constructions of families of asymptotically good minimal codes and therefore infinite families of small strong blocking sets. 

Let $N_q(\mathcal{X})$ denote the number of degree 1 places of an $\mathbb{F}_q$-rational curve $\mathcal{X}$ and consider the Ihara's constant
$$A(q)=\limsup_{g\to \infty} \frac{\max \{\#N_q(\mathcal{X}) \ | \ \mathcal{X} \textrm{ has genus }  g\}}{g}. $$
By the Drinfeld-Vladut Bound (see \cite[Theorem 2.3.22]{tsfasman1991algebraic}) we know that $$A(q) \leq \sqrt{q} - 1.$$

If $q$ is a square then the Drinfeld-Vladut Bound is actually attained, as shown by  Ihara \cite{zbMATH03803619} and
Tsfasman, Vladut and Zink \cite{zbMATH03918262}, using the theory of modular curves. 

We apply now Corollary~\ref{Th:metaconstruction} to the family of AG codes obtained by the curves considered in \cite{MR1423052}.

\begin{theorem}\label{Th:main}
Let $q_0$ be a prime power. Consider  $h\geq 2$. 
Denote by 
\begin{eqnarray*}
N_{h,n}&:=& (q_0^{h(n+1)}-q_0^{hn}-1);\\
\lambda_{h,n}&:=&\begin{cases}
(q_0^{nh/2}-1)^2,& \textrm{if } 2\mid n;\\
(q_0^{(n+1)h/2}-1)(q_0^{(n-1)h/2}-1),& \textrm{if } 2\nmid n.\\
\end{cases}\\ 
\end{eqnarray*}
There exists a family  of good minimal codes $\mathcal{C}_{h,n}$ with parameters

$$\left[N_{h,n}\frac{q_0^{2h}-1}{q_0-1},2h\left(\left\lfloor\frac{N_{h,n}-1}{q_0} \right\rfloor-\lambda_{h,n}+1\right),\geq q_0^{{2h}-1}N_{h,n}\left(1-\frac{1}{q_0}\right)\right]_{q_0}.$$
\end{theorem}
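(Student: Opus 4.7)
The plan is to apply Corollary~\ref{Th:metaconstruction} with an outer code drawn from the Garcia--Stichtenoth tower of \cite{MR1423052} over $\F_{q_0^{2h}}$, and inner code the simplex $\mathcal{S}_{q_0}(2h)$ of parameters $[(q_0^{2h}-1)/(q_0-1),\,2h,\,q_0^{2h-1}]_{q_0}$. Writing $q := q_0^h$, the curve $\mathcal{X}_n$ at level $n$ of this tower is defined over $\F_{q^2}=\F_{q_0^{2h}}$; it has at least $N_{h,n}+1$ rational places and genus at most $\lambda_{h,n}$, where the two displayed cases for $\lambda_{h,n}$ reflect the parity of $n$ in the GS genus formula. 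Asymptotically $N_{h,n}/\lambda_{h,n}\to q-1$, so the tower attains the Drinfeld--Vladut bound for $\F_{q^2}$.

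First, I would select $N_{h,n}$ distinct rational places $P_1,\dots,P_{N_{h,n}}$, pick a further rational place $Q$ with $Q\notin\{P_1,\dots,P_{N_{h,n}}\}$, and form the divisor $G:=aQ$ of degree $a:=\lfloor(N_{h,n}-1)/q_0\rfloor$, together with $D:=P_1+\cdots+P_{N_{h,n}}$. The associated functional AG code $\mathcal{A}:=C_{\mathcal{L}}(D,G)$ has parameters $[N_{h,n},\,K,\,\geq N_{h,n}-a]_{q_0^{2h}}$, with $K\geq a+1-\lambda_{h,n}=\lfloor(N_{h,n}-1)/q_0\rfloor+1-\lambda_{h,n}$ by the Riemann--Roch inequality. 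Passing to a subcode if necessary, one may take $K$ equal to this value.

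Next, I would verify the Outer AB condition. Writing $N_{h,n}=q_0 m+r$ with $0\leq r<q_0$, a short computation yields
\[
(N_{h,n}-a)-N_{h,n}\,\frac{q_0-1}{q_0}=\begin{cases} 1, & r=0,\\ r/q_0, & r\geq 1,\end{cases}
\]
which is strictly positive. Since the maximum weight of $\mathcal{A}$ is bounded by $N_{h,n}$, this gives $d(\mathcal{A})/w(\mathcal{A})>(q_0-1)/q_0$. Corollary~\ref{Th:metaconstruction} then produces the minimal $q_0$-ary code $\mathcal{C}_{h,n}:=\mathcal{S}_{q_0}(2h)\ \Box\ \mathcal{A}$, whose length $N_{h,n}(q_0^{2h}-1)/(q_0-1)$, dimension $2hK$ and minimum distance $\geq q_0^{2h-1}(N_{h,n}-a)\geq q_0^{2h-1}N_{h,n}(1-1/q_0)$ reproduce the claimed parameters.

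The main obstacle is numerical book-keeping: checking that the closed-form $N_{h,n}$ and $\lambda_{h,n}$ in the statement agree with the precise bounds of \cite{MR1423052} (with the $-1$ in $N_{h,n}$ accounting for the rational place $Q$ reserved for the support of $G$), and ensuring that at every level $n\geq 1$ there really are enough distinct $\F_{q_0^{2h}}$-rational places on $\mathcal{X}_n$ to form the divisors $D$ and $G$. Everything else is a mechanical combination of Riemann--Roch for AG codes with Corollary~\ref{Th:metaconstruction}.
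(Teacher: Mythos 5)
Your proposal is correct and follows essentially the same route as the paper: the Garcia--Stichtenoth tower over $\F_{q_0^{2h}}$ as outer code, the simplex $\mathcal{S}_{q_0}(2h)$ as inner code, the choice $\deg G = \lfloor(N_{h,n}-1)/q_0\rfloor$, and Corollary~\ref{Th:metaconstruction}. One small but real improvement in your version: you use only the Riemann--Roch \emph{inequality} $\ell(G)\geq \deg G+1-g$ and pass to a subcode of the stated dimension. The paper instead claims the exact equality $\ell(G_n)=m_n-g_n+1$, which requires $m_n>2g_n-2$; this condition is asserted to hold but is actually violated for the specific choice $m_n=\lfloor(n_n-1)/q_0\rfloor$ when $q_0^h\leq 2q_0+1$ (already for $q_0=2$, $h=2$, $n=4$ one finds $m_n=383<448=2g_n-2$). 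Since the Outer AB ratio $d/w$ is only improved when passing to a subcode, your refinement makes the dimension count airtight in exactly the cases the paper glosses over. The only thing you leave implicit is the ``good'' in ``good minimal codes'': the paper closes with a short computation showing that the rate of $\mathcal{I}\,\Box\,\mathcal{C}_n$ is bounded below by $\frac{2h(q_0-1)}{q_0^{2h}-1}\bigl(\frac{1}{q_0}-\frac{1}{q_0^h-1}\bigr)$ (using $h\geq2$ to keep this positive) while the relative minimum weight is at least $\frac{q_0^{2h-2}(q_0-1)^2}{q_0^{2h}-1}$, both independent of $n$; this is routine from your displayed parameters together with $g_n/n_n\to 1/(q-1)$, but it should be stated to match the theorem's claim.
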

\begin{proof}
Denote $q=q_0^h>2$.  Consider the tower $\mathcal{T}=(T_1,T_2,T_3,\ldots)$ of function fields over $\F_{q^2}$ given by $T_n:=\F_{q^2}(x_1,\ldots,x_n)$, with 
\[x_{i+1}^q+x_{i+1}=\frac{x_i^q}{x_i^{q-1}+1}, \ \text{for} \ i\in\{1,\ldots,n-1\}. \]
By \cite[Remark 3.8.]{MR1423052}, the genus of $T_n$ is
\[g_n:=g(T_n)=\left\{\begin{array}{ll}
(q^{n/2}-1)^2, & \text{if } n\equiv 0 \bmod 2;\\
(q^{(n+1)/2}-1)(q^{(n-1)/2}-1), & \text{if } n\equiv 1 \bmod 2.\\
\end{array}\right.\]
Consider
\begin{eqnarray*}
\Omega&:=&\{\alpha\in \F_{q^2}\mid \alpha^q+\alpha=0\};\\
S&:=&\{R_\alpha\in \mathbb{P}(T_1) \mid \alpha\notin \Omega\}.
\end{eqnarray*}
By \cite[Lemma 3.9]{MR1423052}, any $R\in S$ splits completely in all the extensions $T_n/T_1$. Let $S_n$ be the places in $T_n$ lying over the places in $S$. Since $[T_n:T_1]=q^{n-1}$, 
$\#S_n=\#S\cdot [T_n:T_1]=q^{n-1}(q^2-q)$.
Let $n_n:=\#S_n-1=q^{n-1}(q^2-q)-1$.


Take a place $P_n\in S_n$ and the divisor $G_n:=m_nP_n$ with $2g_n-2<m_n<n_n$. Since $q>2$, such a value $m_n$ exists. Then 
\[k_n:=\dim\C_n=\ell(G_n)=m_n-g_n+1,\]
where  the last equality is the celebrated Riemann-Roch Theorem (see \cite[Chapter 2]{tsfasman1991algebraic}).

In what follows we consider the AG code $\mathcal{C}_n:= C\left(G_n,\sum_{P\in S_n\setminus \{P_n\}}P\right)$, which is an $[n_n,m_n-g_n+1,d_n\geq n_n-m_n]_{q^2}$-code.

Note that
$$\frac{d_n}{w_n}\geq \frac{d_n}{n_n}\geq \frac{n_n-m_n}{n_m}=1-\frac{m_n}{n_n}.$$
Consider $m_n=\left\lfloor\frac{n_n-1}{q_0} \right\rfloor $, so that
$\frac{d_n}{w_n}\geq 1-\frac{m_n}{n_n}>1-\frac{1}{q_0}$.

By Corollary~\ref{Th:metaconstruction}  there exists a minimal $[n_n(q_0^{2h}-1)/(q_0-1),2k_nh,\geq q_0^{{2h}-1}n_n(1-1/q_0)]_{q_0}$-code $\mathcal{D}_n$.

The family $\mathcal{D}_n$ is asymptotically good since the relative minimum weight and the rate are larger than  
\begin{equation*}
\frac{q_0^{2h-2}(q_0-1)^2 }{q_0^{2h}-1} 
\end{equation*}
and 
\begin{eqnarray*}
\frac{2h(q_0-1)\cdot (m_n-g_n+1)}{(q_0^{2h}-1)\cdot n_n}  &>&\frac{2h(q_0-1)}{q_0^{2h}-1}\cdot \left(\frac{1}{q_0}-\frac{g_n}{n_n}\right)\\
&\geq& \frac{2h(q_0-1)}{q_0^{2h}-1}\cdot \left(\frac{1}{q_0}-\frac{1}{q_0^h-1}\right),\\
\end{eqnarray*}
whenever $n\geq 3$. Note that, the assumption $h\geq 2$ is necessary to have $\frac{1}{q_0}-\frac{1}{q_0^h-1}>0.$
\end{proof}

\begin{corollary}\label{Cor:particular}
Let $K_n:=4\left\lfloor\frac{N_{2,n}-1}{q_0}\right\rfloor-4\lambda_{2,n}+4$. In $\mathrm{PG}(K_n-1 ,q_0)$ there exists a strong blocking sets of size at most  $$\left(\frac{q_0(q_0^4-1)(q_0+1)}{4(q_0^2-q_0-1)}\right)\cdot K_n \leq q_0\cdot \frac{q_0^3+2q_0^2+3q_0+5}{4}\cdot K_n.$$
\end{corollary}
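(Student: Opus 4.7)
The plan is to invoke Theorem~\ref{Th:main} with $h=2$ and then convert the resulting code parameters into a size-versus-dimension bound for the associated strong blocking set. Instantiating Theorem~\ref{Th:main} at $h=2$ yields a minimal $\F_{q_0}$-linear code whose length is $N_{2,n}(q_0^4-1)/(q_0-1)$ and whose dimension is precisely the quantity $K_n = 4(\lfloor(N_{2,n}-1)/q_0\rfloor - \lambda_{2,n}+1)$ introduced in the corollary (since $2h = 4$ at $h=2$). Via the code/strong blocking set dictionary of Section~\ref{sec:strongblockingsets}, this gives a strong blocking set in $\mathrm{PG}(K_n - 1, q_0)$ of cardinality exactly $N_{2,n}(q_0^4-1)/(q_0-1)$, so the whole task reduces to controlling the ratio size$/K_n$ uniformly in $n$.

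The heart of the argument is a lower bound for $K_n$ in terms of $N_{2,n}$. Using $\lfloor (N_{2,n}-1)/q_0\rfloor \geq (N_{2,n}-1)/q_0 - 1$ handles the floor. For $\lambda_{2,n}$ the key observation is that in both parity cases it is bounded above by $q_0^{2n}$ (since $(q_0^n-1)^2 \leq q_0^{2n}$ and $(q_0^{n+1}-1)(q_0^{n-1}-1)\leq q_0^{2n}$). Coupling this with the identity $N_{2,n}+1 = q_0^{2n}(q_0^2-1)$ yields the clean estimate $\lambda_{2,n} \leq (N_{2,n}+1)/(q_0^2-1)$. Substituting these two estimates into the expression for $K_n$, solving for $N_{2,n}$, and then multiplying through by the length factor $(q_0^4-1)/(q_0-1) = (q_0+1)(q_0^2+1)$ produces the leading coefficient multiplying $K_n$ in the first displayed inequality.

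The second displayed inequality is then a purely algebraic simplification. Performing a polynomial long division of $(q_0^4-1)(q_0+1) = q_0^5 + q_0^4 - q_0 - 1$ by $q_0^2 - q_0 + 1$ gives quotient $q_0^3 + 2q_0^2 + q_0 - 1$ with a (negative) remainder contribution of $-3q_0$, so $(q_0^4-1)(q_0+1)/(q_0^2-q_0+1)\leq q_0^3 + 2q_0^2 + q_0 - 1$. Multiplying by $q_0/4$ gives the coarser estimate stated after the ``$\leq$''.

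The main obstacle I anticipate is ensuring that the additive $O_{q_0}(1)$ losses incurred by the floor function and by the ``$+1$'' inside $\lambda_{2,n}\leq (N_{2,n}+1)/(q_0^2-1)$ are absorbed cleanly into the final linear bound, so that the inequality holds for every admissible $n$ (not only asymptotically). Since these error terms are bounded independently of $n$ while $K_n$ grows at least linearly in $N_{2,n}$, the absorption is routine, but the bookkeeping to pin down the exact leading constant advertised in the statement requires some care.
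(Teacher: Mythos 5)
Your route is exactly the paper's: set $h=2$ in Theorem~\ref{Th:main} and read off a concatenated code of length $N_{2,n}(q_0^4-1)/(q_0-1)$ and dimension $K_n$. The gap is in the arithmetic you sketch but do not carry out: it does \emph{not} produce the coefficient in the statement. Using $\lfloor(N_{2,n}-1)/q_0\rfloor\geq (N_{2,n}-1)/q_0 -1$ and $\lambda_{2,n}\leq (N_{2,n}+1)/(q_0^2-1)$ as you propose gives
\[
\frac{K_n}{4}\;\geq\;\frac{N_{2,n}-1}{q_0}-\frac{N_{2,n}+1}{q_0^2-1}
\;=\;N_{2,n}\cdot\frac{q_0^2-q_0-1}{q_0(q_0^2-1)}-\frac{q_0^2+q_0-1}{q_0(q_0^2-1)},
\]
since $\frac{1}{q_0}-\frac{1}{q_0^2-1}=\frac{q_0^2-q_0-1}{q_0(q_0^2-1)}$. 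Solving for $N_{2,n}$ and multiplying by $(q_0^4-1)/(q_0-1)$ therefore yields a leading coefficient $\frac{q_0(q_0^4-1)(q_0+1)}{4(q_0^2-q_0-1)}$, with $q_0^2-q_0-1$ in the denominator rather than the $q_0^2-q_0+1$ of the corollary. Since $\lambda_{2,n}/q_0^{2n}\to 1$, this is essentially sharp for the construction, so the discrepancy cannot be repaired by tightening the floor estimate or by the ``routine absorption'' of $O(1)$ errors you defer to at the end; the leading term itself is off. A concrete check: for $q_0=2$, $n=3$ one has $N_{2,3}=191$, $\lambda_{2,3}=45$, $K_3=204$, the concatenated length is $191\cdot 15=2865$, while the corollary's bound evaluates to $\frac{2\cdot 15\cdot 3}{4\cdot 3}\cdot 204=1530<2865$. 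So your assertion that the substitution ``produces the leading coefficient multiplying $K_n$'' is unjustified as written: either the published coefficient contains a sign slip ($q_0^2-1-q_0$ written as $q_0^2+1-q_0$), or an additional idea beyond your outline is needed, and neither possibility is addressed in the proposal.
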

\begin{proof}
The claim follows considering $h=2$ in Theorem \ref{Th:main}. In fact,
\begin{eqnarray*}
K_n&=&4\left\lfloor\frac{N_{2,n}-1}{q_0}\right\rfloor-4\lambda_{2,n}+4\\
&\geq& 4q_0^{2n+1}-4q_0^{2n-1}-4-4(q_0^n-1)^2+4\\
&=&4q_0^{2n+1}-4q_0^{2n}-4q_0^{2n-1}+8q_0^n -4\\
&\geq &4q_0^{2n+1}-4q_0^{2n}-4q_0^{2n-1},
\end{eqnarray*}
and so 
\begin{eqnarray*}\frac{N_{2,n}\frac{q_0^{4}-1}{q_0-1}}{K_n}&\leq& \frac{(q_0^{2n+2}-q_0^{2n}-1)(q_0^{4}-1)}{(q_0-1)(4q_0^{2n+1}-4q_0^{2n}-4q_0^{2n-1})}\\
&\leq& \frac{q_0(q_0^{2}-1)(q_0^{4}-1)}{4(q_0-1)(q_0^{2}-q_0-1)}\leq \frac{q_0(q_0^3+2q_0^2+3q_0+5)}{4}.
\end{eqnarray*}
\end{proof}

\begin{example}
Let $q_0=2$ and $h=3$, so that $q=8$. We consider the function field $T=\F_{64}(x,y)$ with $y^2+y=\frac{x^2}{x+1}$. The genus of $T$ is $49$. In this case the set $S$ of points in $T$ lying over the places that split completely in $T/\F_{64}(x)$ has cardinality $448$. Take any $P\in S$. 
Consider $m=223$, so that the divisor $G=223 P$ and $k=223-49+1=175$ (but any $k$ between $49$ and $175$ may also be chosen). The AG code $\mathcal{C}=C(G,\sum_{Q\in S\setminus\{P\}Q})$ is a $[447,175,\geq 224]_{64}$. Next, we concatenate with the $[7,3,4]_2$ simplex code, so that we obtain a minimal $[3129,525,\geq 896]_2$ code.
\end{example}

Note that, following the proof of Theorem \ref{Th:main}, it is readily seen that the sum of the rate and the relative distance of the concatenation of $\mathcal{C}_n$ with  the simplex code  is lower bounded by 
\begin{equation}\label{Eq:simplex}
1-\frac{2}{q_0}+O\left(\frac{1}{q_0^2}\right),
\end{equation}

In order to maximize it one can choose inner codes different from the simplex one,  as shown in the next example.

\begin{example}\label{Ex:RT4}
The RT4 in \cite[Figure 1b]{Calderbank_Kantor} is a two weight code over $\mathbb{F}_{q_0}$ with parameters $\widetilde{n}=(q_0^5-1)(q_0^2+1)/(q_0-1)$, $h=10$, $\widetilde{d}=q_0^6$, $\widetilde{w}=q_0^6+q_0^4$. Also, $\frac{\widetilde{d}}{\widetilde{w}}=\frac{q_0^2}{q_0^2+1}=1-\frac{1}{q_0^2+1}>1-\frac{1}{q_0}$. 

Now, choosing $m_n=\left\lfloor \frac{n_n(q_0^2-q_0+1)-1}{q_0^3}\right\rfloor$, one gets that the  concatenation of $\mathcal{C}_n$ (as in Theorem~\ref{Th:main}) with  the code RT4 is still minimal over $q_0$.  Also, the rate of $\mathcal{D}_n$ is lower bounded by 
\begin{equation*}
\frac{h}{\widetilde{n}q}\cdot \left(\left(1-\frac{\widetilde{w}}{\widetilde{d}}\frac{q_0-1}{q_0}\right)(q-1)-1\right)=\frac{10(q_0-1)}{q_0^{10}(q_0^5-1)(q_0^2+1)}(q_0^7(q_0^2-q_0+1)-2)\simeq \frac{10}{q_0^{7}}, 
\end{equation*}
which is less than the corresponding value for the simplex code in dimension $10$. 
Note that the relative minimum weight is larger than $\frac{q_0^3-q_0^2+q_0-1}{q_0^3},$
and 
$$\frac{q_0^3-q_0^2+q_0-1}{q_0^3}+\frac{10}{q_0^{7}}=1-\frac{1}{q_0}+\frac{1}{q_0^2}+o(q_0^{-2})$$
which is larger than \eqref{Eq:simplex}.
\end{example}


\begin{theorem}
Suppose that there exists an  $[\widetilde{n},2h,d]_{q_0}$ code $\mathcal{I}$, and denote by $\widetilde{D}:=\frac{d}{w}$. Let 
$$0<\epsilon< 1-\frac{1}{q_0^h-1}-\frac{1}{\widetilde{D}}\left(1-\frac{1}{q_0} \right).$$

Then there exists a family of minimal codes with rate at least $\frac{2h\epsilon}{\widetilde n }$. 

\end{theorem}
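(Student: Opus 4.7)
The plan is to mimic the strategy of Theorem~\ref{Th:main}, but concatenating the tower AG codes over $\F_{q_0^{2h}}$ with the inner code $\mathcal{I}$ instead of a simplex code, and to rely on the AB condition applied directly to the concatenation rather than on the Outer AB condition. Since $\dim \mathcal{I}=2h$, outer codes live naturally over $\F_{q_0^{2h}}=\F_{q^2}$ where $q=q_0^h$, which is exactly the field over which the Garcia--Stichtenoth tower $\mathcal{T}=(T_n)_n$ used in Theorem~\ref{Th:main} is defined.

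First I would recall that, for any outer code $\C$ over $\F_{q_0^{2h}}$ with parameters $[N,K,D]$ and maximum weight $W$, the concatenation $\mathcal{I}\Box\C$ has minimum weight $\geq Dd$ and maximum weight $\leq Ww$, so that
$$\frac{d(\mathcal{I}\Box\C)}{w(\mathcal{I}\Box\C)}\;\geq\;\frac{D}{W}\cdot\widetilde{D}.$$
Hence it is enough to produce an outer code with $\frac{D}{W}>\frac{1}{\widetilde{D}}\bigl(1-\frac{1}{q_0}\bigr)$, because then the AB condition holds for the concatenation and $\mathcal{I}\Box\C$ is minimal (note that we do not need $\mathcal{I}$ itself to be minimal here).

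Next I would construct such outer codes from the tower, exactly as in Theorem~\ref{Th:main}: set $n_n=\#S_n-1=q^{n-1}(q^2-q)-1$, take a completely splitting place $P_n$ and the AG code $\C_n=C(m_nP_n,\sum_{P\in S_n\setminus\{P_n\}}P)$, whose parameters are $[n_n,m_n-g_n+1,\geq n_n-m_n]_{q^2}$. Choosing
$$m_n\;=\;\left\lfloor n_n\Bigl(1-\tfrac{1}{\widetilde{D}}\bigl(1-\tfrac{1}{q_0}\bigr)-\epsilon'\Bigr)\right\rfloor$$
for some small $\epsilon'>0$, one has $\frac{D_n}{W_n}\geq 1-\frac{m_n}{n_n}>\frac{1}{\widetilde{D}}\bigl(1-\frac{1}{q_0}\bigr)$, so $\mathcal{I}\Box\C_n$ is minimal by the observation above. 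The rate of $\C_n$ is $\frac{m_n-g_n+1}{n_n}$, and using the tower asymptotics $\frac{g_n}{n_n}\longrightarrow\frac{1}{q_0^h-1}$ (which needs $h\geq 2$, exactly as in Theorem~\ref{Th:main}), this rate tends to $1-\frac{1}{\widetilde{D}}\bigl(1-\frac{1}{q_0}\bigr)-\frac{1}{q_0^h-1}-\epsilon'$; by the hypothesis on $\epsilon$ one can choose $\epsilon'$ small enough and $n$ large enough so that this value exceeds $\epsilon$.

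Finally, the concatenated code $\mathcal{D}_n:=\mathcal{I}\Box\C_n$ has length $n_n\widetilde{n}$ and $\F_{q_0}$-dimension $2h(m_n-g_n+1)$, hence its rate equals $\frac{2h}{\widetilde{n}}\cdot\frac{k_n}{n_n}\geq\frac{2h\epsilon}{\widetilde{n}}$, giving the desired family. The only non-routine part is the asymptotic juggling with floors and the genus-to-length ratio of the tower to turn the strict inequality in the hypothesis on $\epsilon$ into an achievable rate; the rest is a direct application of AB to the concatenation, together with the parameters already recorded in Theorem~\ref{Th:main}.
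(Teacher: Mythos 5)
Your proposal is correct and follows essentially the same route as the paper's proof: take the Garcia--Stichtenoth tower codes $\mathcal{C}_n$ over $\F_{q_0^{2h}}$ exactly as in Theorem~\ref{Th:main}, concatenate with $\mathcal{I}$, and invoke the classical AB condition on the concatenation (the paper does the same, choosing $m_n$ so that $\epsilon+\tfrac{1}{q_0^h-1}-\tfrac{1}{n_n}\leq \tfrac{m_n}{n_n}<1-\tfrac{1}{\widetilde D}\bigl(1-\tfrac{1}{q_0}\bigr)$, which makes $(1-m_n/n_n)\widetilde D>1-1/q_0$ and yields the stated rate). Your observation that $\mathcal{I}$ need not be minimal is correct and a nice point, but the argument itself matches the paper's.
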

\begin{proof}
Consider the code $\mathcal{C}_n$ as in Theorem \ref{Th:main}. Choose $n$ and $m_n$ such that $$\epsilon+\frac{1}{q_0^h-1}-\frac{1}{n_n}\leq \frac{m_n}{n_n}<1-\frac{1}{\widetilde{D}}\left(1-\frac{1}{q_0}\right).$$
By our assumption on $\widetilde{D}$, such a value $m_n$ exists. Since
$$\left(1-\frac{m_n}{n_n}\right)\widetilde{D}>1-\frac{1}{q_0},$$
$\mathcal{I} \ \square \ \mathcal{C}_n$ is a minimal $q_0$-ary code by the AB condition.

Therefore,
$$\mathcal{R}(\mathcal{I} \ \square \ \mathcal{C}_n)\geq \frac{2h}{\widetilde{n}}\left(\frac{1}{n_n}+\frac{m_n}{n_n}-\frac{g_n}{n_n}\right)\geq \frac{2h\epsilon}{\widetilde{n}}.$$
\end{proof}

\begin{corollary}\label{Cor:General}
Suppose that there exists an  $[\widetilde{n},2h,d]_{q_0}$ code $\mathcal{I}$, and denote by $\widetilde{D}:=\frac{d}{w}$. Let 
$$0<\epsilon< 1-\frac{1}{q_0^h-1}-\frac{1}{\widetilde{D}}\left(1-\frac{1}{q_0} \right).$$

Let 
$$K_{h,n}:=\left\lfloor N_{h,n}\left(1-\frac{1}{\widetilde{D}}\left(1-\frac{1}{q_0}\right)\right)-1\right\rfloor.$$
In $\mathrm{PG}(K_{h,n}-1,q)$ there exists a strong blocking set of size at most 
$ \frac{\widetilde{n}}{2h\epsilon} K_{h,n}$.
\end{corollary}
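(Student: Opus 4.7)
The plan is to derive this corollary as a direct specialization of the preceding theorem, optimized so that $m_n$ is taken as large as possible, and then translate from codes to strong blocking sets via the correspondence recalled in Section~\ref{sec:strongblockingsets}.

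First, I would verify that the choice $m_n := K_{h,n} = \lfloor N_{h,n}(1 - \frac{1}{\widetilde{D}}(1 - \frac{1}{q_0})) - 1 \rfloor$ lies in the admissible range used in the proof of the preceding theorem, namely $\epsilon + \frac{1}{q_0^h-1} - \frac{1}{n_n} \leq \frac{m_n}{n_n} < 1 - \frac{1}{\widetilde{D}}(1 - \frac{1}{q_0})$. The upper bound is immediate from the definition of the floor. The lower bound rests on the hypothesis $\epsilon < 1 - \frac{1}{q_0^h - 1} - \frac{1}{\widetilde{D}}(1 - \frac{1}{q_0})$: this strict inequality is precisely what creates room in the interval for a suitable integer $m_n$ for all sufficiently large $n$, while the asymptotic estimate $g_n/n_n \to 1/(q_0^h - 1)$ derived from the formulas for $g_n = \lambda_{h,n}$ and $n_n = N_{h,n}$ ensures $g_n \leq n_n/(q_0^h-1)$ in the limit, justifying the passage from $m_n \geq \epsilon n_n + n_n/(q_0^h-1) - 1$ to $m_n - g_n + 1 \geq \epsilon n_n$.

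Next, by the preceding theorem, the concatenated code $\mathcal{I} \ \Box \ \mathcal{C}_n$ is a minimal $q_0$-ary code of length $\widetilde{n} \cdot N_{h,n}$ and rate at least $\frac{2h\epsilon}{\widetilde{n}}$. By the correspondence recalled in Section~\ref{sec:strongblockingsets}, this minimal code produces a strong blocking set whose ambient projective dimension is governed by the $\F_{q_0}$-dimension of the code and whose cardinality equals its length. Using the rate bound one then obtains
$$\text{size of strong blocking set} \;=\; \widetilde{n} \cdot N_{h,n} \;\leq\; \frac{1}{\text{rate}} \cdot \text{dim} \;\leq\; \frac{\widetilde{n}}{2h\epsilon} \cdot K_{h,n},$$
after identifying the dimensional parameter $K_{h,n}$ with the dimension of the ambient projective space arising from the concatenated code (up to the normalization already carried out in Corollary~\ref{Cor:particular} for the case $h=2$).

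The core argument is therefore routine given the preceding theorem; no new analytic input is required beyond specializing the free parameter $m_n$. The main piece of care lies in the bookkeeping to reconcile the explicit expression for $K_{h,n}$ with the actual $\F_{q_0}$-dimension of $\mathcal{I} \ \Box \ \mathcal{C}_n$ (which carries an extra factor of $2h$ and a genus correction $-\lambda_{h,n}+1$), and in checking that the admissible range for $m_n$ is nonempty, both of which are direct consequences of the hypotheses.
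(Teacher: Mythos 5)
Your strategy is the intended one: take $m_n$ essentially at the top of the admissible window from the preceding theorem, invoke the rate bound $\mathcal{R}(\mathcal{I}\ \square\ \mathcal{C}_n)\geq 2h\epsilon/\widetilde{n}$, and translate length $\leq \frac{\widetilde{n}}{2h\epsilon}\cdot\dim$ into a size bound on the corresponding strong blocking set. That much is correct, and the check that the admissible interval for $m_n$ is nonempty is indeed a direct consequence of the strict inequality assumed for~$\epsilon$.

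However, the step you label ``bookkeeping'' is not merely a normalization that one can wave away, and your own observation that the actual $\F_{q_0}$-dimension of $\mathcal{I}\ \square\ \mathcal{C}_n$ is $2h(m_n-g_n+1)$ rather than $K_{h,n}$ is exactly the crux. As written, $K_{h,n}=\left\lfloor N_{h,n}\left(1-\tfrac{1}{\widetilde{D}}\left(1-\tfrac{1}{q_0}\right)\right)-1\right\rfloor$ is (up to an off-by-one) the \emph{outer} parameter $m_n$, whereas the ambient projective space must be $\mathrm{PG}\big(2h(m_n-\lambda_{h,n}+1)-1,\,q_0\big)$, and it is the quantity $2h(m_n-\lambda_{h,n}+1)$ that the rate bound controls. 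The inequality $\widetilde{n}N_{h,n}\leq \frac{\widetilde{n}}{2h\epsilon}\cdot 2h(m_n-\lambda_{h,n}+1)$ does follow from the theorem, but $\widetilde{n}N_{h,n}\leq \frac{\widetilde{n}}{2h\epsilon}K_{h,n}$ with $K_{h,n}$ as literally defined does not: since $\lambda_{h,n}/N_{h,n}\to 1/(q_0^h-1)$, one has $2h(m_n-\lambda_{h,n}+1)\gg K_{h,n}$ for $h\geq 2$, and the required inequality $2h\epsilon N_{h,n}\leq K_{h,n}$ is not implied by the hypothesis on~$\epsilon$. To make the conclusion genuinely follow, the definition of $K_{h,n}$ should be the code dimension itself, namely
$$K_{h,n}=2h\left(\left\lfloor N_{h,n}\left(1-\frac{1}{\widetilde{D}}\left(1-\frac{1}{q_0}\right)\right)-1\right\rfloor-\lambda_{h,n}+1\right),$$
mirroring the convention used in Corollary~\ref{Cor:particular} for the simplex case. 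So: right approach, right observation about the mismatch, but you should flag and correct the formula rather than defer to ``direct consequences of the hypotheses,'' since with $K_{h,n}$ as printed the claimed size bound does not hold.
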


\begin{remark}
Corollary \ref{Cor:General} provides an upper bound on the minimum size of strong blocking sets. Note that since $\widetilde{D}\geq 1$, $\epsilon <\frac{1}{q_0}-\frac{1}{q_0^h-1}$. 

On the other hand, by \cite[Corollary 3.7]{alfarano2020three}, since  $\mathcal{I}$ is not a constant weight code,
 $$\widetilde{n}\left(\frac{q_0^{2h-1}-1}{q_0^{2h}-1}\right)>2h-1\iff \frac{2h}{\widetilde{n}}<\frac{q_0^{2h-1}-1}{q_0^{2h}-1}+\frac{1}{\widetilde{n}}\iff \frac{\widetilde{n}}{2h}>\frac{\widetilde{n}(q_0^{2h}-1)}{\widetilde{n}(q_0^{2h-1}-1)+q_0^{2h}-1}.$$
The minimum  for the upper bounds on minimal strong  blocking set (considering also $\epsilon\simeq \frac{1}{2q_0}$) is roughly
 $$\frac{\widetilde{n}(q_0^{2h}-1)q_0}{\widetilde{n}(q_0^{2h-1}-1)+q_0^{2h}-1}K_{h,n}\simeq q_0^2K_{h,n},$$
 whenever $\widetilde{n}>>q_0$. This shows that in principle there is room for improvement in the bound provided by Corollary \ref{Cor:particular} if a suitable code $\mathcal{I}$ exists.
\end{remark}

\begin{remark}
It would be interesting to search for $[n,2h,d]_{q_0}$ codes $\mathcal{I}$ maximizing the quantity
$$\frac{2h}{n}\left(1-\frac{1}{q_0^h-1}-\frac{w}{d}\left( 1-\frac{1}{q_0}\right)\right) .$$
\end{remark}

\section{Numerical results and application to saturating sets}

\subsection{Some examples}

We conclude the paper with some numerical results and explicit constructions for small fields. This serves to document the power of the concatenation construction, which often provides the smallest strong blocking sets known, for a given dimension and field size.

We may concatenate MDS codes over $\F_{q^2}$ with  simplex codes $\mathcal{S}_q(2)$, of parameters $[q+1,2,q]_q$, which are the shortest of dimension $2$. We get $[(qK-q+1)(q+1),2K,\geq ((q-1)(K-1)+1)q]_q$ minimal codes. Those which are shorter than the upper bound of Theorem \ref{thm:upperbound} have the following parameters: $[9,4]_2$, $[15,6]_2$, $[16, 4]_3$, $[28, 6]_3$, $[40,8]_3$, $[25,4]_4$, $[45,6]_4$, $[65,8]_4$, $[85,10]_4$.

Another possibility is to obtain short minimal codes by concatenating MDS codes over $\F_{q^3}$ with short minimal codes of dimension $3$ over $\F_q$ (in ${\rm PG}(3,q)$ there are strong blocking sets of size $3q$,
namely the union of three non-concurrent lines). We get minimal codes shorter than the upper bound of Theorem \ref{thm:upperbound} for the following parameters: $[18, 6]_2$, $[30, 9]_2$, $[42, 12]_2$, $[54, 15]_2$, $[16, 6]_3$, $[28, 9]_3$.

Finally, let us remark that we use  inner codes corresponding to the tetrahedron (see \cite[Theorem 5.3.]{alfarano2019geometric}) or some shorter ones constructed in \cite{alfarano2020three}, whose length is quadratic in the dimension. Even if we cannot get strong blocking sets smaller than those in \cite{Heger:2021vr}, this construction has the great advantage of being explicit.

\begin{example}
We illustrate with a table some minimal codes that one can obtain by Corollary~\ref{thm:MDSconcat}. Since we want to maximize the rate, we choose the inner code to have the shortest length. In the binary case, we know (see \cite{alfarano2019geometric}) that the shortest minimal codes in dimensions $\leq 5$ have parameters $[3,2,2]_2$, $[6,3,3]_2$, $[9,4,4]_2$, $[13,5,5]_2$. In the ternary case, the simplex $[4,2,3]_3$ code is the shortest minimal code of dimension $2$ and we use a $[9,3,5]_3$ code with generator matrix
\[\begin{bmatrix}
1&0&0&1&2&0&0&2&2\\
0&1&0&0&0&1&2&1&2\\
0&0&1&1&1&1&1&1&1\\
\end{bmatrix}
\]
which has the shortest length in dimension $3$ (by \cite{MR3907797} or \cite[Theorem 4.10]{alfarano2019geometric}).
So we get Table~1 (in the last column we have the lower bound of Theorem \ref{thm:lowerbound} and the upper bound given by Theorem \ref{thm:upperbound}, unless otherwise specified). 
\begin{table}[!ht]
    \centering
    \begin{tabular}{c|c|c|c}
        Outer & Inner & Concatenated & Shortest length \\
        \hline
        $[3,2,2]_4$ & $[3,2,2]_2$ & $[9,4,4]_2$ & Yes\\
        $[5,3,3]_4$ & $[3,2,2]_2$ & $[15,6,6]_2$ & Yes\\
        $[3,2,2]_{16}$ & $[9,4,4]_2$ & $[27,8,8]_2$  & $21\leq n\leq 26$\\
        $[5,3,3]_8$ & $[6,3,3]_2$ & $[30,9,9]_2$ & $24\leq n\leq 40$\\
        $[3,2,2]_{32}$ & $[13,5,5]_2$ & $[39,10,10]_2$ &  $27\leq n\leq 30$ (see \cite{cohen-zemor})\\
        $[7,4,4]_8$ & $[6,3,3]_2$ & $[42,12,12]_2$ & $ 33\leq n\leq 55$\\
        $[9,5,5]_8$ & $[6,3,3]_2$ & $[54,15,15]_2$ & $ 42\leq n\leq 69$\\
        $[7,4,4]_{16}$ & $[9,4,4]_2$ & $[63,16,16]_2$  &  $45\leq n\leq 74$\\
        $[3,2,2]_{512}$ & $[30,9,9]_2$ & $[90,18,18]_2$  &  $51\leq n\leq 84$\\
        $[9,5,5]_{16}$ & $[9,4,4]_2$ & $[81,20,20]_2$  &  $57\leq n\leq 93$\\
        \hline
        $[4,2,3]_9$ & $[4,2,3]_3$ & $[16,4,9]_3$ &  $12\leq n\leq 14$ (see \cite{alfarano2019geometric})\\
        $[7,3,5]_9$ & $[4,2,3]_3$ & $[28,6,15]_3$ & $20\leq n\leq 40$ \\
        $[10,4,7]_{9}$ & $[4,2,3]_3$ & $[40,8,21]_3$ & $28\leq n\leq 56$\\
        $[7,3,5]_{27}$ & $[9,3,5]_3$ & $[63,9,26]_3$ & $32\leq n\leq 64$\\
        $[10,4,7]_{27}$ & $[9,3,5]_3$ & $[90,12,35]_3$ & $44\leq n\leq 84$\\
        $[13,5,9]_{27}$ & $[9,3,5]_3$ & $[117,15,45]_3$ & $56\leq n\leq 108$
    \end{tabular}
    \label{tab:MDS}
    \caption{MDS concatenated with shortest minimal codes}
\end{table}

We give here the generator matrix of the $[15,6,6]_2$ code obtained by the concatenation of the extended Reed-Solomon $[5,3,3]_4$ with the simplex $[3,2,2]_2$ code:
\[G:=\begin{bmatrix}
1&0&1&0&0&0&0&0&0&1&0&1&1&1&0\\
0&1&1&0&0&0&0&0&0&0&1&1&1&0&1\\
0&0&0&1&0&1&0&0&0&1&1&0&1&1&0\\
0&0&0&0&1&1&0&0&0&1&0&1&1&0&1\\
0&0&0&0&0&0&1&0&1&1&1&0&1&0&1\\
0&0&0&0&0&0&0&1&1&1&0&1&0&1&1
\end{bmatrix}\]
The length of this code meets the bound in Theorem \ref{thm:lowerbound}. In the matrix $G$, every third column is the sum of the previous two. This means that the underlying projective system is the union of $5$ disjoint lines (which are necessarily in higgledy-piggledy arrangement since the code is minimal).
By \cite[Theorem 3.12]{Heger:2021vr}, this is the minimum number of lines in higgledy-piggledy arrangement in PG$(5,2)$.
\end{example}

\begin{example}
We illustrate in Table \ref{tab:MDS2} some minimal codes that one can obtain by Corollary~\ref{Th:metaconstruction}. Since we want to maximize the rate, we choose the outer code to be the shortest code of a given dimension satisfying the Outer AB condition, in the library of codes with the best known minimum distance in {\sc Magma}.
Again, in the last column we have the lower bound of Theorem \ref{thm:lowerbound} and the upper bound given by Theorem \ref{thm:upperbound}. 

\begin{table}[!ht]
    \centering
    \begin{tabular}{c|c|c|c}
        Outer & Inner & Concatenated & Shortest length \\
        \hline
        $[ 9, 4, 5 ]_{4}$ & $[3,2,2]_2$ & $[ 27, 8, 10 ]_2$  & $21\leq n\leq 36$\\
        $[ 11, 5, 6 ]_{4}$ & $[3,2,2]_2$ & $[ 33, 10, 12 ]_2$  & $27\leq n\leq 45$\\
        $[ 15, 6, 8 ]_{4}$ & $[3,2,2]_2$ & $[ 45, 12, 16 ]_2$  & $33\leq n\leq 55$\\
        $[ 21, 7, 11 ]_{4}$ & $[3,2,2]_2$ & $[ 63, 14, 22 ]_2$  & $39\leq n\leq 65$\\
        $[ 23, 8, 12 ]_{4}$ & $[3,2,2]_2$ & $[ 69, 16, 24 ]_2$  & $45\leq n\leq 74$\\
    $ [ 16, 5, 11 ]_{9}$ & $[4,2,3]_3$ & $[ 64, 10, 33 ]_3$  & $36\leq n\leq 72$
    \end{tabular}
    \label{tab:MDS2}
    \caption{Best known linear codes satisfying Outer AB condition  concatenated with simplex codes}
\end{table}
\end{example}

\subsection{Application to $\rho$-saturating sets}\label{sec:application}

As a byproduct of our explicit  constructions of short minimal codes, we present results on $\rho$-saturating sets of small size, which have deep connections with strong blocking sets  \cite{1930-5346_2011_1_119}.

\begin{definition}
A set $\mathcal{S} \subseteq  \mathrm{PG}(k-1, q)$ is called $\rho$-\emph{saturating} if for any point $Q \in \mathrm{PG}(k-1, q) \setminus  \mathcal{S}$ there exist $\rho+1$ points  $P_1,\ldots,P_{\rho+1}\in \mathcal{S}$ such that $Q\in \langle P_1,\ldots,P_{\rho+1}\rangle$ and  $\rho$ is the smallest value with this property.  Let $s_q(k-1,\rho)$ denote the smallest size of a $\rho$-saturating set in $\mathrm{PG}(k-1, q)$.
\end{definition}

Recall that the covering radius of an $[n,n-k]_q$ code
is the least integer $R$ such that the space $\F_q^n$ is covered by
spheres of radius $R$ centered on codewords. It is easy to prove (see \cite{1930-5346_2011_1_119}) that a linear $[n,n-k]_q$ code has covering radius $R$ if every
element of $\F_q^k$ is a linear combination of $R$ columns of a generator matrix of the dual code (that is the orthogonal space with respect to the Euclidean inner product), and $R$ is the smallest value with such a  property. Thus the correspondence presented in Section \ref{sec:strongblockingsets} specializes to a correspondence between  $(R-1)$-saturating sets of size $n$ in $\mathrm{PG}(k-1,q)$ and the dual of  $[n,n-k]_q$ codes of covering radius $R$.

A connection between strong blocking sets and $\rho$-saturating sets is the following. 

\begin{theorem}[\!\!\cite{1930-5346_2011_1_119}]
Any strong blocking set in a subgeometry $\mathrm{PG}(k-1,q)$ of  $\mathrm{PG}(k-1,q^{k-1})$ is a $(k-2)$-saturating set in $\mathrm{PG}(k-1,q^{k-1})$.
\end{theorem}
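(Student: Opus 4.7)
The goal is to show that, for every point $Q\in \mathrm{PG}(k-1,q^{k-1})\setminus \mathcal{M}$, there exist $k-1$ points of $\mathcal{M}$ whose $\mathbb{F}_{q^{k-1}}$-span contains $Q$. The plan is to find a hyperplane of the \emph{subgeometry} passing through $Q$ (after extension of scalars), and then invoke the strong blocking property inside the subgeometry.

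\textbf{Step 1: Find an extended subgeometry hyperplane through $Q$.} Fix homogeneous coordinates $(Q_1,\ldots,Q_k) \in \mathbb{F}_{q^{k-1}}^k$ for $Q$, chosen so that the subgeometry $\mathrm{PG}(k-1,q)$ corresponds to points with $\mathbb{F}_q$-coordinates. Hyperplanes of the subgeometry are parametrized by nonzero duals $(a_1,\ldots,a_k)\in \mathbb{F}_q^k$ (up to $\mathbb{F}_q^*$), and the extension of such a hyperplane to $\mathrm{PG}(k-1,q^{k-1})$ contains $Q$ precisely when $\sum_{i=1}^k a_i Q_i = 0$. Consider the $\mathbb{F}_q$-linear map
\[
L\colon \mathbb{F}_q^k \longrightarrow \mathbb{F}_{q^{k-1}},\qquad (a_1,\ldots,a_k)\longmapsto \sum_{i=1}^k a_i Q_i,
\]
where the target is viewed as a $(k-1)$-dimensional $\mathbb{F}_q$-vector space. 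A dimension count yields $\dim_{\mathbb{F}_q} \ker L \geq k-(k-1)=1$, so there exists a nonzero $(a_1,\ldots,a_k)\in \ker L$. This defines a hyperplane $\mathcal{H}$ of the subgeometry $\mathrm{PG}(k-1,q)$ whose extension $\widetilde{\mathcal{H}}$ (the $\mathbb{F}_{q^{k-1}}$-span) is a hyperplane of $\mathrm{PG}(k-1,q^{k-1})$ containing $Q$.

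\textbf{Step 2: Apply the strong blocking property.} Because $\mathcal{M}$ is a strong blocking set in the subgeometry, $\langle \mathcal{M}\cap \mathcal{H}\rangle = \mathcal{H}$, where the span is taken over $\mathbb{F}_q$ inside $\mathrm{PG}(k-1,q)$. In particular, we can choose $k-1$ points $P_1,\ldots,P_{k-1}\in \mathcal{M}\cap \mathcal{H}$ that are linearly independent over $\mathbb{F}_q$. Since their homogeneous coordinates lie in $\mathbb{F}_q$, linear independence over $\mathbb{F}_q$ is equivalent to linear independence over the extension $\mathbb{F}_{q^{k-1}}$. Hence $\langle P_1,\ldots,P_{k-1}\rangle_{\mathbb{F}_{q^{k-1}}}$ is a $(k-2)$-dimensional projective subspace of $\mathrm{PG}(k-1,q^{k-1})$, and being contained in $\widetilde{\mathcal{H}}$ (also of projective dimension $k-2$), it must equal $\widetilde{\mathcal{H}}$.

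\textbf{Step 3: Conclude.} Since $Q\in \widetilde{\mathcal{H}} = \langle P_1,\ldots,P_{k-1}\rangle$, the set $\mathcal{M}$ is a $(k-2)$-saturating set of $\mathrm{PG}(k-1,q^{k-1})$.

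\textbf{Main obstacle.} The only nontrivial ingredient is the existence statement in Step 1, but the dimension inequality $k>(k-1)=\dim_{\mathbb{F}_q}\mathbb{F}_{q^{k-1}}$ makes it essentially automatic; indeed, this is precisely why the exponent $k-1$ appears in the field extension. The rest is bookkeeping about how $\mathbb{F}_q$-spans of $\mathbb{F}_q$-rational points behave under extension of scalars.
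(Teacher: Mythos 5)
Your argument is correct and self-contained. The paper itself does not prove this theorem but only cites it (it is Theorem~3.2 of \cite{1930-5346_2011_1_119}), so there is no in-paper proof to compare against; your proof follows the natural route and is the standard one in the literature. Step~1 (the dimension count $k > k-1 = \dim_{\mathbb{F}_q}\mathbb{F}_{q^{k-1}}$ forcing a nonzero $\mathbb{F}_q$-rational dual vector annihilating $Q$) is precisely what makes the exponent $k-1$ the right one, and Step~2 correctly uses that $\mathbb{F}_q$-independence of $\mathbb{F}_q$-rational vectors is preserved under extension of scalars. One small remark: the paper's Definition of $\rho$-saturating requires $\rho$ to be \emph{minimal} with the spanning property, whereas your proof establishes only that every external point lies in the span of $k-1$ points of $\mathcal{M}$; this is the standard reading of the theorem (and of the cited source), but if one insists on literal minimality one would also observe that a strong blocking set cannot be $(k-3)$-saturating in $\mathrm{PG}(k-1,q^{k-1})$ because a subspace of projective dimension $k-3$ spanned by $\mathbb{F}_q$-rational points cannot cover all of $\mathrm{PG}(k-1,q^{k-1})$ as the number of such subspaces is too small.
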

Recently, improvements on the upper bound on the minimum size of a $\rho$-saturating set have been obtained in \cite{Davydov_2019,denaux2021constructing}. 

\begin{theorem}[\!\!\cite{denaux2021constructing}]
The following bound on  the minimum size of a $\rho$-saturating in $\mathrm{PG}(k-1,q^{\rho+1})$ holds 
$$\frac{\rho+1}{e}q^{k-1-\rho}\leq s_{q^{\rho+1}}(k-1,\rho)\leq (\rho+1)\left((\rho+2)\frac{q^{k-1-\rho}}{2}+\rho \frac{q^{k-1-\rho}-1}{q-1}\right).$$
\end{theorem}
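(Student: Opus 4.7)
The statement bundles two inequalities proved by quite different techniques, so I would treat them separately.

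For the lower bound, the plan is a standard double-counting covering estimate. Set $Q:=q^{\rho+1}$, let $\mathcal{S}\subseteq\mathrm{PG}(k-1,Q)$ be $\rho$-saturating, and put $n:=|\mathcal{S}|$. By definition, every point of the ambient space is contained in at least one projective $\rho$-flat spanned by $\rho+1$ points of $\mathcal{S}$. There are at most $\binom{n}{\rho+1}$ such flats, each containing $(Q^{\rho+1}-1)/(Q-1)$ points, so
$$\binom{n}{\rho+1}\;\geq\;\frac{Q^k-1}{Q^{\rho+1}-1}\;\geq\;Q^{k-\rho-1}\;=\;q^{(\rho+1)(k-\rho-1)},$$
where the second inequality uses $Q^k-1\geq Q^{k-\rho-1}(Q^{\rho+1}-1)$. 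Combining $\binom{n}{\rho+1}\leq n^{\rho+1}/(\rho+1)!$ with Stirling's inequality $((\rho+1)!)^{1/(\rho+1)}\geq (\rho+1)/e$ then delivers $n\geq \frac{\rho+1}{e}\,q^{k-1-\rho}$.

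For the upper bound, the plan is constructive, and the shape of the estimate is quite informative. The factor $\rho(\rho+1)$ matches the order of magnitude of a small strong blocking set in a low-dimensional $\mathbb{F}_q$-subgeometry, while the factor $q^{k-1-\rho}$ counts the $\mathbb{F}_q$-points of an auxiliary transversal flat. A natural plan is: fix a small strong blocking set in a subgeometry $\mathrm{PG}(\rho,q)\subseteq\mathrm{PG}(\rho,q^{\rho+1})$, which by the theorem immediately preceding the statement yields a $(\rho-1)$-saturating set of $\mathrm{PG}(\rho,q^{\rho+1})$; then replicate this configuration along a spread of $\mathrm{PG}(k-1,q^{\rho+1})$, arranging the translates so that every external point is hit by some $(\rho+1)$-secant picking one point from each of several translates. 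The two summands $q^{k-1-\rho}/2$ and $(q^{k-1-\rho}-1)/(q-1)$ would then correspond, respectively, to the bulk count of translates required to cover generic points and to a correction for ``gluing'' points shared between consecutive translates.

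The main obstacle is clearly the upper bound. Writing down a candidate configuration is routine, but verifying that it is genuinely $\rho$-saturating with the sharp leading constant $\rho(\rho+1)$ requires producing, for each external point $P$, an explicit $(\rho+1)$-tuple of set points whose $\mathbb{F}_{q^{\rho+1}}$-span contains $P$. This forces the underlying strong blocking set and the chosen spread to interact compatibly: replacing either ingredient by a generic choice only yields the weaker constants from Theorem~\ref{thm:upperbound}, and squeezing out the exact factor $\rho(\rho+1)/2$ is where the finer combinatorics of \cite{denaux2021constructing} enters the argument. The lower bound, in contrast, is essentially automatic once the covering inequality is written down.
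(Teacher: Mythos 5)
The paper does not prove this statement: it is imported verbatim from \cite{denaux2021constructing} and used as a black box, so there is no in-paper proof to compare your attempt against. Evaluating the attempt on its own terms: your lower-bound argument is correct and complete. The union of the spans of all $(\rho+1)$-subsets of $\mathcal{S}$ covers $\mathrm{PG}(k-1,q^{\rho+1})$; each span has at most $\frac{Q^{\rho+1}-1}{Q-1}$ points with $Q=q^{\rho+1}$, so $\binom{n}{\rho+1}\geq \frac{Q^k-1}{Q^{\rho+1}-1}\geq Q^{k-\rho-1}$, and combining $\binom{n}{\rho+1}\leq n^{\rho+1}/(\rho+1)!$ with $(\rho+1)!\geq ((\rho+1)/e)^{\rho+1}$ gives $n\geq\frac{\rho+1}{e}q^{k-1-\rho}$. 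This is the standard covering estimate, and there is nothing to add.

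The upper bound, however, you have not proved; you have only described a shape of argument and flagged the hard part yourself. Two concrete issues. First, the auxiliary result you invoke is misapplied: the preceding theorem turns a strong blocking set of a subgeometry $\mathrm{PG}(\rho,q)$ into a $(\rho-1)$-saturating set of $\mathrm{PG}(\rho,q^{\rho})$, with extension degree $\rho$, not $q^{\rho+1}$ as you wrote; the mismatch of field-extension degrees means your proposed building block does not live in the space you want, and ``replicating along a spread'' does not by itself reconcile the two exponents. Second, and more fundamentally, the construction in \cite{denaux2021constructing} does not proceed by tiling translates of a fixed low-dimensional configuration; it builds the saturating set from carefully chosen points of several $\mathbb{F}_q$-subgeometries inside $\mathrm{PG}(k-1,q^{\rho+1})$ and proves saturation by an explicit decomposition of an arbitrary external point over a suitable basis. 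The coefficients $\rho(\rho+1)/2$ and $(q^{k-1-\rho}-1)/(q-1)$ come out of counting those subgeometry points, not from a ``bulk plus gluing'' heuristic over a spread. So the lower bound stands, but the upper bound remains a genuine gap: a candidate set is not exhibited, and no point of the ambient space is shown to lie in a $(\rho+1)$-secant.
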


\noindent In particular, for $\rho=k-2$, 
\begin{equation}\label{Eq:saturating2}
    s_{q^{k-1}}(k-1,k-2)\leq q\binom{k}{2}+(k-1)(k-2).
\end{equation}

The approach described in Section \ref{subsection:explicit constructions} yields an explicit construction of small saturating sets. We use the same notation:
\begin{itemize}
    \item $q_0$ is a prime power;
    \item $N_{2,n}:= (q_0^{2(n+1)}-q_0^{2n}-1)$;
    \item $\lambda_{2,n}:=\begin{cases}
(q_0^{n}-1)^2,& \textrm{if } 2\mid n;\\
(q_0^{n+1}-1)(q_0^{n-1}-1),& \textrm{if } 2\nmid n;\\
\end{cases}$
\item $K_n:=4\left\lfloor\frac{N_{2,n}-1}{q_0}\right\rfloor-4\lambda_{2,n}+4$.
\end{itemize}

\begin{theorem}
In $\mathrm{PG}(K_n-1 ,q_0^{K_n-1})$ there exists an explicit construction of $(K_n-2)$-saturating set of size   at most  
\begin{equation}\label{Eq:saturating}
\left(\frac{q_0(q_0^4-1)(q_0+1)}{4(q_0^2-q_0-1)}\right)\cdot K_n\leq q_0\cdot \frac{q_0^3+2q_0^2+3q_0+5}{4} \cdot K_n\simeq \frac{q_0^4}{4}K_n.\end{equation}
\end{theorem}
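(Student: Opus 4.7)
The plan is to combine the explicit small strong blocking set supplied by Corollary~\ref{Cor:particular} with the subgeometry--saturating set correspondence recalled in the theorem from \cite{1930-5346_2011_1_119} quoted just above the statement. Essentially everything substantive has been done already; the present theorem is a direct translation of a strong blocking set over $\mathbb{F}_{q_0}$ into a saturating set over $\mathbb{F}_{q_0^{K_n-1}}$.

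First I would invoke Corollary~\ref{Cor:particular} (specializing Theorem~\ref{Th:main} to $h=2$ and concatenating the resulting AG code $\mathcal{C}_n$ with the simplex code $\mathcal{S}_{q_0}(4)$) to obtain an explicit strong blocking set $\mathcal{B}\subseteq \mathrm{PG}(K_n-1,q_0)$ with
$$|\mathcal{B}|\leq \left(\frac{q_0(q_0^4-1)(q_0+1)}{4(q_0^2-q_0+1)}\right)\cdot K_n.$$
This is already the first quantity in \eqref{Eq:saturating}, so the desired size bound is handed to me for free.

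Next I would regard $\mathrm{PG}(K_n-1,q_0)$ as the canonical $\mathbb{F}_{q_0}$-subgeometry of $\mathrm{PG}(K_n-1,q_0^{K_n-1})$ via extension of scalars from $\mathbb{F}_{q_0}$ to $\mathbb{F}_{q_0^{K_n-1}}$, and apply the theorem from \cite{1930-5346_2011_1_119} with $k=K_n$ and $q=q_0$. That result turns any strong blocking set in the subgeometry into a $(k-2)$-saturating set in the ambient space, so $\mathcal{B}$ becomes a $(K_n-2)$-saturating set in $\mathrm{PG}(K_n-1,q_0^{K_n-1})$ of the same cardinality. The remaining inequalities displayed in \eqref{Eq:saturating} are elementary algebraic majorations of the rational expression (using $q_0^4-1=(q_0^2-1)(q_0^2+1)$ and bounding the factor $(q_0^2+1)(q_0+1)/(q_0^2-q_0+1)$ from above by $q_0^3+2q_0^2+q_0-1$), and the asymptotic $\simeq q_0^4/4\cdot K_n$ is immediate.

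I do not foresee any real obstacle: the difficulty is entirely absorbed into Theorem~\ref{Th:main} (the AG tower construction), Corollary~\ref{Th:metaconstruction} and Corollary~\ref{Cor:particular} (the Outer AB condition plus concatenation with a simplex code producing a small strong blocking set in a subgeometry), and the subgeometry--saturating correspondence of \cite{1930-5346_2011_1_119}. The interest of the present statement is precisely that it transfers the explicit, near-linear-size strong blocking sets built in Section~\ref{subsection:explicit constructions} to the much less accessible world of saturating sets in large alphabet projective spaces, yielding bounds competitive with (and in some regimes better than) \eqref{Eq:saturating2}.
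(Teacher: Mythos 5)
Your proof is correct and is precisely the argument the paper intends (the paper states this theorem without a written proof, as it follows immediately from Corollary~\ref{Cor:particular} combined with the subgeometry--saturating set theorem of~\cite{1930-5346_2011_1_119}): take the explicit strong blocking set of that size in $\mathrm{PG}(K_n-1,q_0)$, view it inside the subgeometry $\mathrm{PG}(K_n-1,q_0)\subset\mathrm{PG}(K_n-1,q_0^{K_n-1})$, and apply the cited result with $k=K_n$, $q=q_0$. One minor slip in your parenthetical: the majoration is not that $(q_0^2+1)(q_0+1)/(q_0^2-q_0+1)\leq q_0^3+2q_0^2+q_0-1$, but rather that $(q_0^4-1)(q_0+1)/(q_0^2-q_0+1)\leq q_0^3+2q_0^2+q_0-1$, which reduces (after multiplying out) to $q_0^5+q_0^4-q_0-1\leq q_0^5+q_0^4+2q_0-1$, i.e.\ $3q_0\geq 0$; also note the ambient field in the statement should read $q_0^{K_n-1}$ rather than $q^{K_n-1}$, a typo you implicitly and correctly corrected.
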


Note that although Bound \eqref{Eq:saturating} is worse than the estimate  in \cite[Corollary 6.4]{Heger:2021vr}, it provides the best explicit construction of small saturating sets for specific dimensions and it improves   \eqref{Eq:saturating2}.

\section*{Acknowledgments}
This work was started during the first author's stay at the LAGA at University Paris 8 as a Visiting Professor in 2021. The research of D. Bartoli was supported by  the Italian National Group for Algebraic and Geometric Structures and their Applications (GNSAGA - INdAM). The second author was partially supported by the ANR-21-CE39-0009 - BARRACUDA (French \emph{Agence Nationale de la Recherche}). The authors thank the anonymous reviewers whose detailed comments helped to improve the presentation of the paper.

\bibliographystyle{abbrv}
\bibliography{references.bib}

\end{document}